\theoremstyle{plain}
\newtheorem{theorem}{Theorem}
\newtheorem{lemma}[theorem]{Lemma}
\newtheorem*{wrongprinciple*}{Incorrect principle}
\theoremstyle{definition}
\newtheorem{remark}[theorem]{Remark}
\newtheorem{principle}[theorem]{Principle}
\newtheorem{definition}[theorem]{Definition}
\newcommand{\UU}{\mathcal{U}}
\newcommand{\refl}{\mathsf{refl}}
\newcommand{\ct}{%
  \mathchoice{\mathbin{\raisebox{0.5ex}{$\displaystyle\centerdot$}}}%
             {\mathbin{\raisebox{0.5ex}{$\centerdot$}}}%
             {\mathbin{\raisebox{0.25ex}{$\scriptstyle\,\centerdot\,$}}}%
             {\mathbin{\raisebox{0.1ex}{$\scriptscriptstyle\,\centerdot\,$}}}
}
\newcommand{\trunc}[2]{\mathopen{}\left\Vert #2\right\Vert_{#1}\mathclose{}}
\newcommand{\tproj}[3][]{\mathopen{}\left|#3\right|_{#2}^{#1}\mathclose{}}
\newcommand{\bproj}[1]{\tproj{}{#1}}
\newcommand{\defeq}{:\equiv} 
\newcommand{\code}{\mathsf{code}}
\newcommand{\inp}{\ensuremath{\mathsf{in}}}
\newcommand{\glue}{\ensuremath{\mathsf{glue}}}
\newcommand{\inl}{\ensuremath{\mathsf{inl}}}
\newcommand{\inr}{\ensuremath{\mathsf{inr}}}
\newcommand{\N}{\mathbb N}
\newcommand{\Z}{\mathbb Z}
\newcommand{\Sone}{\mathbb{S}^1}
\newcommand{\unit}{\mathbf 1}
\newcommand{\quot}{A \! \sslash \! {\scriptstyle \sim}}
\newcommand{\specialquot}[1]{#1 \! \sslash \! {\scriptstyle \sim}}
\newcommand{\idtoeqv}{\mathsf{idtoeqv}} 
\newcommand{\projone}{\mathsf{proj}_1}
\newcommand{\projtwo}{\mathsf{proj}_2}
\newcommand{\id}{\mathsf{id}}
\newcommand{\pout}[3]{#2 \! \sqcup^{#1} \! #3}
\newcommand{\ap}{\ensuremath{\mathsf{ap}}}
\newcommand{\nil}{\mathsf{nil}}
\newcommand{\cons}{\mathsf{cons}}
\newcommand{\coequ}{\mathsf{Coequ}(f,g)}
\newcommand{\coequspec}[2]{\mathsf{Coequ}(#1,#2)}
\renewcommand{\AA}{\mathcal A}
\newcommand{\CC}{\mathcal C}
\newcommand{\DD}{\mathcal D}
\newcommand{\PP}{\mathcal P}
\newcommand{\CCCC}{\CC}
\newcommand{\transport}{\mathsf{transport}}
\newcommand{\glconstr}{\mathsf{gl}}
\title[Path Spaces of Higher Inductive Types]{Path Spaces of Higher Inductive Types \\ in Homotopy Type Theory}
\author{Nicolai Kraus \and Jakob von Raumer}
\thanks{%
Nicolai Kraus is supported by the Engineering and Physical Sciences Research Council (EPSRC), grant reference EP/M016994/1.}
\begin{document}

\begin{abstract}
The study of equality types is central to homotopy type theory.
Characterizing these types is often tricky, and various strategies, such as the \emph{encode-decode method}, have been developed.

We prove a theorem about equality types of coequalizers and pushouts, reminiscent of an induction principle and without any restrictions on the truncation levels.
This result makes it possible to reason directly about certain equality types and to streamline existing proofs by eliminating the necessity of auxiliary constructions.


To demonstrate this, we give a very short argument for the calculation of the fundamental group of 
the circle (Licata and Shulman~\cite{licataShulman_circle}), and for the fact that pushouts preserve embeddings.
Further, our development suggests a higher version of the Seifert-van Kampen theorem, and the set-truncation operator maps it to the standard Seifert-van Kampen theorem (due to Favonia and Shulman~\cite{favonia:SvK}).

We provide a formalization of the main technical results in the proof assistant Lean.

\end{abstract}

\maketitle

\section{Introduction, Motivation, and Overview}

\subsection{Homotopy Type Theory}

Martin-L\"{o}f's intensional type theory
is a specific form of type theory which can serve as both a foundation for dependently typed programming languages and as a system in which mathematics can be developed.
An important concept is \emph{identity} or \emph{equality types}:
if $A$ is a type and $x,y : A$ are elements,
then $\mathsf{Id}_A(x,y)$ is a type whose elements we view as proofs that $x$ and $y$ are equal.
Following a widespread convention, we denote this type by $(x =_A y)$ or $(x = y)$.
In contrast, we denote definitions by $\defeq$.

\emph{Homotopy type theory}, commonly known as \emph{HoTT}, is a variation of Martin-L\"{o}f's type theory.
It is inspired by the observation that equalities behave like paths in homotopy theory, and this connection is so central that equality types are even referred to as \emph{path spaces} in HoTT.
As described in the book~\cite{hott-book}, two main features distinguish it from other variations of type theory.
First, Voevodsky's \emph{univalence axiom} (or \emph{univalence principle}) ensures that the equality of types corresponds to equivalence of types (``coherent isomorphism'').
Second, \emph{higher inductive types} 
are an implementation of the idea that, if we can generate the elements of a type inductively, we could inductively generate its equalities at the same time.

\subsection{Quotients and Coequalizers} \label{sec:Quot-Coequ}

One central example for a class of higher inductive types is what we call \emph{(homotopy) coequalizers} of relations.
Coequalizers can (via straightforward constructions) be used to encode many other higher inductive types such as circles and spheres, tori, suspensions, general pushouts, and (via more difficult constructions) propositional truncations~\cite{floris_proptrunc,Kraus:2016:CNH:2933575.2933586} and higher truncations~\cite{rijke:join}.

Coequalizers are of particularly great importance for the development of HoTT in the proof assistant \emph{Lean}~\cite{moura:lean}, since they are, together with propositional truncations, the only classes of higher inductive types that are defined in the prelude and thus ``available by default''.
Much of HoTT in Lean is based on them, and they are known as \emph{quotients} or \emph{typal quotients}~\cite{leanhott}
in the Lean community.
While they certainly \emph{look} like quotients, we choose to avoid this name since it could be confusing for readers outside of the Lean community (see the discussion below).
\begin{definition}[coequalizer of a relation] \label{def:coequ}
Assume $A : \UU$ is a type ($\UU$ is a universe), and $\sim$ is a family of types indexed twice over $A$, sometimes called a ``binary proof-relevant relation'' $\sim : A \times A \to \UU$;
we write $(x \sim y)$ instead of $\sim(x,y)$.
The \emph{coequalizer} $\quot$ is the higher inductive type generated by the constructors $[-]$ and $\glue$, as in:
\begin{equation} \label{eq:quot-def}
 \begin{aligned}
 \mathsf{data} & \; \quot : \, \UU \; \mathsf{ } \\ 
 & [-] : A \to \quot \\
 & \glue : \Pi \{a,b : A\}. (a \sim b) \to [a] = [b]
 \end{aligned}
\end{equation}
\end{definition}

The constructors express the idea that we take $A$ and make related elements equal.
We use curly brackets for the first two arguments of the $\glue$ constructor, $\{a,b : A\}$, to express that we will keep these arguments implicit to improve readability.
On paper, we can view this as purely on the level of notation, i.e.\ write $\glue(s)$ simply as shorthand notation for $\glue(a,b,s)$.

Let us justify why we call $\quot$ a \emph{coequalizer}.
In standard category theory, given two morphisms/functions $f,g : X \to A$, their coequalizer $\coequ$ can be thought of as the object/type $A$ where $f(x)$ and $g(x)$ are identified.
In ``standard'' HoTT (as developed in the book~\cite{hott-book}), this can be expressed as the following higher inductive type:
\begin{equation} \label{eq:coequ-def}
 \begin{aligned}
 \mathsf{data} & \; \coequ : \, \UU \; \mathsf{ } \\ 
 & \iota : A \to \coequ \\
 & \mathsf{resp} : (x:X) \to \iota(f(x)) = \iota(g(x))
 \end{aligned}
\end{equation}
Given $f$ and $g$, we can define the relation $\sim$ on $A$ by $(a \sim b) \defeq \Sigma(x:X).(f(x)=a) \times (g(x) = b)$.
It is then easy to see that $\quot$ is equivalent to $\coequ$.
In Lean, where the higher inductive type \eqref{eq:coequ-def} is not available, we can thus use $\quot$ instead.

Vice versa, if we start with a relation $\sim$, we can consider the two projections
\begin{equation}
 \mathsf{proj}_1,\mathsf{proj}_2 : \left(\Sigma(a,b : A).a \sim b\right) \rightrightarrows A
\end{equation}
Then, $\coequspec{\mathsf{proj}_1}{\mathsf{proj}_1}$ is equivalent to $\quot$.

In order to explain why we choose to avoid calling $\quot$ a quotient, we want to emphasize two points:

\begin{enumerate}[(I)]
 \item \label{item:I}
 Recall that a \emph{(homotopy) set} in HoTT is a type satisfying the principle of unique identity proofs, i.e.\ a type $A$ such that, for $a,b : A$ and $p,q : a = b$, we always have $p = q$.
 The type $\quot$ is in general not a set.
 However, the variation of the construction which forces it to be a set is in the book~\cite{hott-book} called a \emph{set-quotient} and sometimes simply a \emph{quotient}.
 \item \label{item:II}
The relation $\sim$ is neither required to be \emph{(homotopy) propositional} (i.e.\ it is ``proof relevant''), nor is it required to be reflexive, symmetric, or transitive.
\end{enumerate}

\noindent 
It might be reasonable to speak of a quotient by a higher relation (cf.\ \cite{boulierRijkeTab_higherRels}) which is freely generated by $\sim$,
but we do not go into this.

The points \ref{item:I} and \ref{item:II} above make coequalizers very flexible and remarkably powerful.
Not forcing $\quot$ to be a set lets us implement many interesting structures.
For example, we can consider the (seemingly) trivial case where $A$ is the unit type and $\sim$ is constantly unit as well.
Then, the quantifications in the constructors in~\eqref{eq:quot-def} are unnecessary, and~\eqref{eq:quot-def} simplifies to the definition of the \emph{circle type} $\Sone$, as on the right side:

\vspace*{.4cm}

\noindent
\begin{minipage}[c]{.15\textwidth}
\hfill
\end{minipage}
\begin{minipage}[c]{.33\textwidth}
\begin{tikzpicture}[baseline=(current bounding box.center)]
 \draw[black] (.5,0) circle (.5);
 \draw[black,fill=black] (0,0) circle (.6ex);
 \node at (-.5,.0) {$\mathsf{base}$};
 \node at (1.4,.15) {$\mathsf{loop}$};
\end{tikzpicture}
\end{minipage}
\begin{minipage}[t]{.5\textwidth}
\vspace*{-.8cm}
\begin{equation} \label{eq:S1-def}
 \begin{aligned}
 \mathsf{data} & \; \Sone : \, \UU \; \mathsf{ } \\ 
 & \mathsf{base} : \Sone \\
 & \mathsf{loop} : \mathsf{base} = \mathsf{base}
 \end{aligned}
\end{equation}
\end{minipage}

\vspace*{.2cm}

\noindent
The left side above shows how $\Sone$ can be drawn, thinking of elements as points and equalities as paths as suggested by the intuition that HoTT is inspired by.

\begin{wrapfigure}[6]{r}[0pt]{0pt}
  \begin{tikzpicture}[x=1.5cm,y=-1.25cm,baseline=(current bounding box.center)]
   \node (L) at (0,0) {$L$};
   \node (M) at (0,1) {$M$};
   \node (N) at (1,0) {$N$};
   \node (P) at (1,1) {$P$};
  
   \draw[->] (L) to node[left] {$\scriptstyle f$} (M);
   \draw[->] (L) to node[above] {$\scriptstyle g$} (N);
   \draw[->, dashed] (M) to node {} (P);
   \draw[->, dashed] (N) to node {} (P);
  \end{tikzpicture}
\end{wrapfigure}

Point \ref{item:II} from above allows further important constructions.
We have already seen that general (homotopy) coequalizers of two functions can be constructed.
Similarly, the (homotopy) pushout $P$ on the right can be defined as $\specialquot{(M+N)}$, where
\begin{equation} \label{eq:pushout-from-coequ}
 \left(\mathsf{inl}(m) \sim \mathsf{inr}(n)\right) \defeq \Sigma(l : L). (f(l) = m) \times (g(l) = n)
\end{equation}
and 
$\left(\mathsf{inl}(m) \sim \mathsf{inl}(m')\right)$,
$\left(\mathsf{inr}(n) \sim \mathsf{inr}(n')\right)$,
$\left(\mathsf{inr}(n) \sim \mathsf{inl}(m)\right)$ are all empty.
Here, $\sim$ is neither reflexive, nor symmetric, nor transitive.

%

\noindent
Higher inductive types, such as the ones above,
allow the development of a synthetic version of homotopy theory inside HoTT (cf.~\cite{Buchholtz2018,Buchholtz2018CellularCI,buchholtz2016cayley,BuchRijke_projectiveSpaces,favonia:SvK,licataFinster_Eilenberg,licataBrunerie_s1again,Brunerie2017,rijke:join}).
A main objective of this line of research is to describe, classify, and compare path spaces (i.e.\ equality types) or homotopy groups (i.e.\ truncated path spaces) of higher inductive types such as circles and spheres.

%


For a (higher) inductive type, we know that its elements are generated by the constructors.
This is expressed by \emph{elimination principles}.
Following the terminology of the book~\cite{hott-book}, we refer to the dependent elimination rule as \emph{induction} and the non-dependent one as \emph{recursion}.
The induction principle for coequalizers, as it is standard in HoTT and implemented in Lean, states the following:

\begin{principle}[induction for coequalizers] \label{principle:quot-induction}
 Given a type family $P : \quot \to \UU$, and terms
 \begin{align}
   &f : \Pi(a:A). P([a]) \\
   &e : \Pi \{a,b:A\},(s:a \sim b). f(a) =_{\glue(s)} f(b),
 \end{align}
 we get a term
 \begin{equation}
  \mathsf{ind}_{P,f,e} : \Pi(x : \quot). P(x)
 \end{equation}
 such that $\mathsf{ind}_{P,f,e}([a])$ computes to $f(a)$ and, if applied on $\glue(s)$, it equals what we get from $e$.
\end{principle}
Here, we use the \emph{path-over} a.k.a.\ \emph{dependent path} notation~\cite[p.183]{hott-book} in the expression $f(a) =_{\glue(s)} f(b)$: 
Note that $f(a) : P([a])$ and $f(b) : P([b])$ do not have the same type, but by transporting/substituting along $\glue(s)$, we can equate them.

\subsection{Motivation for the Main Result}

Often, we want to find out what specific equality types look like.
This is directly the goal when calculating the homotopy groups of given types (as in the synthetic homotopy theory mentioned above), but it is also a necessary intermediate step for many other constructions.
For a very concrete example, let us recall the calculation of the loop space of the circle $\Sone$ by Licata and Shulman~\cite{licataShulman_circle}.
This loop space of $\Sone$, as defined above in \eqref{eq:S1-def}, is by definition simply the equality type $(\mathsf{base} = \mathsf{base})$.
Licata and Shulman introduce and explain the \emph{encode-decode method}:
in order go get started, they ``guess'' that the loop space in question is equivalent to the integers $\Z$ (looking at the left side of \eqref{eq:S1-def}, the intuition is that one can go around the loop clockwise any number of times, and negative numbers correspond to going anticlockwise).
Licata and Shulman then define a type family $\mathsf{Cover} : \Sone \to \UU$, inspired by the ``guess'', and construct functions between $\mathsf{Cover}(x)$ and $(\mathsf{base} = x)$ in order to show that these types are equivalent. 
Finally, observing that $\mathsf{Cover}(\mathsf{base})$ is $\Z$ gives the desired result.

The encode-decode method has been employed successfully in a variety of cases.
Going through the necessary steps can be somewhat tedious but it often at least partially mechanical.
One main goal in this paper is to develop a different method to directly work with equality types of coequalizers and pushouts (and constructions based on them): 
Since elimination rules such as \Cref{principle:quot-induction} characterize the points of an inductive type, and higher inductive types define points and equalities simultaneously, we believe that it is natural to hope for an ``induction principle for equalities'', i.e.\ a theorem which is reminiscent of an elimination rule.
More concretely, for our case of coequalizers, let us assume we are given 
a type family 
\begin{equation} \label{eq:Q-induction-wanted}
 Q: \Pi \{a,b : A\}.([a] = [b]) \to \UU.
\end{equation}
We ask ourselves whether there are simple-to-check conditions that are sufficient to conclude $Q(q)$ for a general $q$, i.e.\ for any given $a,b : A$ with $q: [a] = [b]$.

Note that $Q$ in \eqref{eq:Q-induction-wanted} quantifies over two elements of $A$ and an equality in $\quot$.
In comparison, if we asked the same question for a type family $S : \Pi(x,y : \quot). (x = y) \to \UU$ instead of $Q$, the answer would be the $J$ eliminator (a.k.a.\ \emph{path induction}),
which says that it would be sufficient to prove $S(\refl_x)$.
What we want and need in all the application is the version with ``restricted'' endpoints as in \eqref{eq:Q-induction-wanted}.

It turns out that there is an easy but powerful generalization of the above question.
We get this generalization by switching from the \emph{global} (or \emph{unbased}) setting as in \eqref{eq:Q-induction-wanted} to a \emph{local} (\emph{based}) one:
we can fix one of the two endpoints to be $[a_0] : \quot$ and replace $Q$ by a family which is indexed only \emph{once} over $A$,
\begin{equation} \label{eq:P-induction-wanted}
 P: \Pi \{b : A\}.([a_0] = [b]) \to \UU.
\end{equation}
This is akin to the difference between the standard $J$ (a.k.a.\ \emph{path induction}) and the Paulin-Mohring $J$~\cite{Moh93} (a.k.a.\ \emph{based path induction}).
Just as for the two versions of $J$, a principle answering the based version of the question also answers the unbased one, and we thus focus exclusively on the former.

To get some intuition for the subtleties of equality types, let us first look at an ``obvious'' induction principle for \eqref{eq:P-induction-wanted} that turns out to be wrong.
Usually, induction principles contain ``one case for every constructor'' (e.g.\ \Cref{principle:quot-induction} contains one case for $[-]$, and one case for $\glue$).
The standard equality constructor is $\refl$, and \eqref{eq:quot-def} contains a further path constructor $\glue$.
Thus, we might try:
\begin{wrongprinciple*}
 Given $a_0$ and a family $P$ as in \eqref{eq:P-induction-wanted} and terms
  \begin{align}
   & r : P(\refl_{[a_0]}) \label{eq:wrong-principle-1} \\
   & p : \Pi \{b : A\}, (s : a_0 \sim b). P(\glue(s)) \label{eq:wrong-principle-2}
  \end{align}
 can we conclude $\Pi \{b: A\},(q : [a_0] = [b]). P(q)$ ?
\end{wrongprinciple*}
\begin{proof}[Counterexample]
 Consider the relation $\sim$ on the natural numbers $\N$, defined by $(m \sim n) \defeq (m+1 = n)$.
 We can then look at the coequalizer $\specialquot{\N}$.
 Let us take $1 : \N$ as the base point and $P: \Pi(n : \N).([1] = [n]) \to \UU$, defined by $P(n,q) \defeq n \geq 1$.
 It is very easy to construct the terms $r$ and $p$ in (\ref{eq:wrong-principle-1},\ref{eq:wrong-principle-2}).
 At the same time, we have that $Q(0,\glue^{-1})$ is empty.
\end{proof}

The above na\"ive suggestion was easy to disprove, but let us try to understand why it was insufficient.
Equalities that come from $A$ can, by $J$, be assumed to be $\refl$; these are sufficiently covered.
However, this is not true for equalities that are generated using the $\glue$ constructor.
The counterexample uses that we have not explicitly closed them under symmetry, and similarly, we could have used that we have not closed them under transitivity.
%

How could we fix this?
Given an equality $q$ in $\quot$, we can compose it with $\glue(s)$ assuming the endpoints match, which suggests that the induction principle we are looking for should assume $Q(q) \to Q(q \ct \glue(s))$,
where $q \ct p$ denotes the concatenation of two equalities $p$ and $q$.
We also can compose with $\glue(s)^{-1}$, suggesting that we also need $Q(q) \to Q(q \ct \glue(s)^{-1})$.
The operations of composing with $\glue(s)$ and composing with $\glue(s)^{-1}$ should be inverse to each other, which motivates us to ask for only \emph{one} of them and require this one to be an equivalence, i.e.\ $Q(q) \simeq Q(q \ct \glue(s))$.
This leads us to a valid induction principle which is short, useful (as we will see when discussing applications), and comes with $\beta$-rules.
Proving this principle is a main result of this paper:

\begin{restatable}[induction for coequalizer equality]{theorem}{mainresultstatementbased}
\label{thm:mainresult-based}
 Assume we have $A$, $\sim$ as before and a point $a_0: A$, and are further
 given a type family
 \begin{equation} \label{eq:mainresult-based-P}
 P: \Pi \{b : A\}.([a_0] = [b]) \to \UU
 \end{equation}
 together with terms
  \begin{align}
   & r : P(\refl_{[a_0]}) \\
   & e : \Pi\{b,c : A\}, (q: [a_0] = [b]), (s : b \sim c). \nonumber \\
   & \phantom{e : \Pi \{ } P(q) \simeq P(q \ct \glue(s))
  \end{align}
 Then, we can construct a term
 \begin{equation}
  \mathsf{ind}_{r,e} : \Pi\{b: A\},(q : [a_0] = [b]). P(q)
 \end{equation}
 with the following $\beta$-rules:
 \begin{align}
  & \mathsf{ind}_{r,e} (\refl_{[a_0]}) = r \label{eq:thm-based-first-beta} \\
  & \mathsf{ind}_{r,e}(q \ct \glue(s)) = e (q,s, \mathsf{ind}_{r,e}(q)) \label{eq:thm-based-second-beta}
 \end{align}
\end{restatable}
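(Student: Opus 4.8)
The plan is to deduce the principle from \emph{based path induction} (Paulin--Mohring $J$) by \emph{extending} the family $P$ along the whole based path space of $\quot$. First I would apply the ordinary coequalizer induction principle (\Cref{principle:quot-induction}) to the type family $x \mapsto \big(([a_0]=x)\to\UU\big)$ in order to construct a family
\[
 \bar P : \Pi(x:\quot).\,([a_0]=x)\to\UU ,
\]
with point-clause $\bar P([b]) \defeq \lambda q.\,P(q)$, so that $\bar P([b],q)$ computes to $P(q)$. Once $\bar P$ is available, recall that $\Sigma(x:\quot).([a_0]=x)$ is contractible with centre $([a_0],\refl_{[a_0]})$, so based path induction produces a section $h:\Pi(x:\quot),(q:[a_0]=x).\,\bar P(x,q)$ out of the single datum $r : P(\refl_{[a_0]}) \equiv \bar P([a_0],\refl_{[a_0]})$. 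Setting $\mathsf{ind}_{r,e}\{b\}(q) \defeq h([b],q) : P(q)$, the first $\beta$-rule \eqref{eq:thm-based-first-beta} is immediate from the computation rule of based path induction together with the point-clause computation rule for $\bar P$.

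The real content of $\bar P$ is its $\glue$-clause. For $s : b\sim c$ one must give a path over $\glue(s)$ between the two functions; unfolding transport in a function type (whose domain is transported \emph{backwards} along $\glue(s)$) this amounts to an equality $\lambda q.\,P(q\ct\glue(s)^{-1}) = \lambda q.\,P(q)$ of maps $([a_0]=[c])\to\UU$. By function extensionality it suffices, for each $q:[a_0]=[c]$, to exhibit an equality $P(q\ct\glue(s)^{-1}) = P(q)$, and the equivalence $e(q\ct\glue(s)^{-1},\,s) : P(q\ct\glue(s)^{-1}) \simeq P\big((q\ct\glue(s)^{-1})\ct\glue(s)\big)$ composed with the canonical path $(q\ct\glue(s)^{-1})\ct\glue(s) = q$ does exactly this, via univalence. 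This is the only place univalence enters, precisely as in the encode--decode method.

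The main obstacle is the second $\beta$-rule \eqref{eq:thm-based-second-beta}. Write $\tilde h : \Pi\bigl(w : \Sigma(x:\quot).([a_0]=x)\bigr).\,\bar P(w)$ for $h$ regarded as a section over the contractible total space. For $q:[a_0]=[b]$ and $s:b\sim c$ there is a canonical path $\gamma : ([b],q) = ([c],q\ct\glue(s))$ lying over $\glue(s)$, its second component being the standard identification $\transport^{[a_0]=-}(\glue(s),q) = q\ct\glue(s)$. Then $\mathsf{apd}_{\tilde h}(\gamma)$ yields $h([c],q\ct\glue(s)) = \transport^{\bar P}(\gamma)\bigl(h([b],q)\bigr)$, so it remains to identify $\transport^{\bar P}(\gamma) : P(q)\to P(q\ct\glue(s))$ with the function underlying $e(q,s,-)$. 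Unfolding $\transport^{\bar P}$ along a path over $\glue(s)$ in terms of $\mathsf{apd}_{\bar P}(\glue(s))$ (the $\glue$-clause installed above), the transport-along-$\mathsf{ua}$ rule, and the cancellation $(q\ct\glue(s))\ct\glue(s)^{-1} = q$, one checks this is exactly $e(q,s,-)$, whence $\mathsf{ind}_{r,e}(q\ct\glue(s)) = e(q,s,\mathsf{ind}_{r,e}(q))$. None of these steps is deep, but chaining the transport computations (function types, $\Sigma$-types, univalence) together coherently is where all the bookkeeping lives; an alternative route — which also seems better suited to the Seifert--van Kampen applications and to obtaining sharper computational behaviour — is to replace $\bar P$ by an explicit inductive description of $[a_0]=[b]$ as finite zig-zags of $\glue$-constructors (``reductions''), to characterise that description by encode--decode, and to read off both $\beta$-rules from its defining equations. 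I would attempt the slick argument first and fall back to the combinatorial model if the coherence obligations become unwieldy.
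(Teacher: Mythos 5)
Your proposal is correct in outline, and the route is genuinely different from the paper's. You extend $P$ to a family $\bar P:\Pi(x:\quot).([a_0]=x)\to\UU$ by ordinary coequalizer induction into the universe (\Cref{principle:quot-induction}, with the $\glue$-clause manufactured from $e$ via function extensionality and univalence), then obtain the section by based path induction on the contractible total space $\Sigma(x:\quot).([a_0]=x)$, and finally recover both $\beta$-rules by computing $\transport^{\bar P}$ along the canonical pair-path over $\glue(s)$, using the propositional $\glue$-computation rule of the eliminator, the $\mathsf{ua}$- and funext-computation rules, and groupoid laws (including the small coherence needed to move the evaluation point of $e$ from $(q\ct\glue(s))\ct\glue(s)^{-1}$ to $q$). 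None of these obligations hides an obstruction, so your ``slick'' branch suffices and the combinatorial zig-zag fallback is unnecessary. The paper instead packages the same three ingredients --- large elimination of $\quot$ (\Cref{lemma:quot-ind-equiv} with the constant family $\UU$), univalence, and singleton contraction --- into an isomorphism of wild categories $\CC\simeq\DD$ (\Cref{lem:cats-are-iso}), reads off the initial object of $\DD$ (\Cref{lem:D0-init}), transports it to obtain the non-dependent initiality statement (\Cref{thm:mainresult-cat-based}), and only then derives \Cref{thm:mainresult-based} by the total-space construction with a strictification trick replacing your transport bookkeeping (\Cref{sec:get-ind-from-rec}). What the paper's detour buys is the initiality/uniqueness formulation itself, which is used directly in the applications (the circle computation and the higher Seifert--van Kampen argument), and a cleaner separation of the coherence reasoning; what your route buys is brevity and directness, in the spirit of the alternative proof attributed to Sojakova in the paper's final remarks, at the cost of concentrating all the transport algebra in the verification of the second $\beta$-rule and of not yielding the categorical statement without an extra (standard) step.
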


\begin{remark}
 The above theorem can be proved in a way which makes the first $\beta$-rule \eqref{eq:thm-based-first-beta} hold judgmentally.
 This is what we have done in our formalization (see \Cref{sec:formalization}).
 It offers some additional convenience.
 In this paper, we do not track judgmental equalities explicitly.
\end{remark}

\subsection{Further Contents}

\Cref{sec:mainresult-cat} proves the stated theorems above.
The proof makes use of the fact that such induction principles always have non-dependent counterparts which, when stated together with their uniqueness properties, are interderivable with the induction principles.
%
%
The section should be seen as the core of the paper. It is split into three subsections
introducing the main ideas, performing the technical constructions, and deriving the induction principle from the non-dependent version.
We will see that it is useful to state our main results for pushouts instead of coequalizers, and this is discussed further in \Cref{sec:pushouts-etc}.
Building on this, we offer several further results and applications in this paper.

In \Cref{sec:applications},
we demonstrate first applications of our theorems, one for the non-dependent coequalizer version and another for the dependent pushout version.
Concretely, in \Cref{sec:application-S1}, we show how our result \emph{immediately} implies that the loop space of the circle \eqref{eq:S1-def} is equivalent to the integers~\cite{licataShulman_circle}.
In \Cref{sec:application-pushout-emb}, we apply our result to prove a theorem in which our main result helps to avoid a somewhat tedious encode-decode agrument:
Embeddings are preserved under pushouts~\cite{eric:embedding-pushout}.


The Seifert-van Kampen theorem, a result in algebraic topology allowing the computation of the fundamental group of a space if the groups of subspaces are known, was formulated and proved in HoTT by Favonia and Shulman~\cite{favonia:SvK}.
How to do a \emph{higher} version of this theorem, i.e.\ without set-truncation, is an open question in HoTT.
In \Cref{sec:explicit}, we suggest one formulation of a higher Seifert-van Kampen theorem and prove it using the main result of this paper.


\subsection{Summary of our Contributions}

\begin{itemize}
 \item We prove a new induction principle for equality types in coequalizers and pushouts.
 \item To demonstrate the usefulness of this principle, we present a very short proof of the fact that the loop space of $\Sone$ is $\Z$~\cite{licataShulman_circle} and that embeddings are closed under pushout (possibly a new result in HoTT).
 \item We formulate and prove a higher dimensional Seifert-van Kampen theorem, generalizing the result by Favonia and Shulman~\cite{favonia:SvK}.
\end{itemize}

\subsection{Setting and Notation}
We work in the ``standard'' version of homotopy type theory that is also developed in the book~\cite{hott-book}
with sums, $\Pi$- and $\Sigma$-types, a hierarchy of univalent universes $\UU$ (we actually only need two),
and inductive and higher inductive types. 
To be precise, the main part only requires \emph{coequalizers} (\Cref{def:coequ}) and no other higher inductive types, which is what Lean provides by default; and our formalization does not require any further ``higher inductive types via postulates''.
Only \Cref{sec:explicit} makes use of the additional concept of an \emph{indexed} higher inductive types, but this part is independent from the main results of this paper.

We use standard notation as used in~\cite{hott-book} (with only minor modifications).
In particular, we uncurry implicitly and write $f(a,b)$ instead of $f(a)(b)$ or $f_a(b)$ if $f$ has type $A \to B \to C$.
To further improve readability, we use \emph{implicit arguments} (purely on the notational level) as explained after \Cref{def:coequ} above.

\subsection{Formalization} \label{sec:formalization}

The main technical results have been formalized in the theorem prover Lean~\cite{moura:lean}:
We formalized the equivalence of wild categories 
and constructed their initial objects as in \Cref{subsec:cats} and \ref{subsec:Initiality}.
We showed the non-dependent eliminator and its uniqueness, using a shortcut to make
the first $\beta$-rule hold judgmentally, and used it to derive the
dependent eliminator (the induction principle) with judgmental first $\beta$-rule as proved in \Cref{sec:get-ind-from-rec}.
We furthermore implemented the version for pushouts similar to the construction
of \Cref{sec:pushouts-etc} and proved that pushouts preserve embeddings
(\Cref{sec:application-pushout-emb}).
We have not formalized the example in \Cref{sec:application-S1}; the (in the context of this paper) interesting part of that example is immediate.
Further, the development and discussion in \Cref{sec:explicit} is not formalized (cf.\ \Cref{remark:generalized-squid}).

Our code can be found at \href{https://gitlab.com/fplab/freealgstr}{\nolinkurl{gitlab.com/fplab/freealgstr}}.

\section{The Main Theorem: Path Spaces of Coequalizers} \label{sec:mainresult-cat}

We will first formulate and prove the non-dependent version of the main result, by developing the corresponding categorical framework inside type theory. 
This then allows us to derive the induction principle as stated in \Cref{thm:mainresult-based}.

\subsection{Categorical Ideas in Type Theory} \label{subsec:cats}

Using categorical ideas to structure constructions and reason inside type theory is standard.
The induction (a.k.a.\ dependent elimination) principle of an inductive type 
can equivalently be formulated as a recursion (non-dependent elimination) principle together with a uniqueness principle, often formulated as a \emph{universal property}.
A principled way of doing this is to define objects and morphisms of a category; the statement is then that the inductive type in question is (homotopy) initial in this category.
For the specific case of HoTT, the connection between induction and initiality has been shown by Awodey, Gambino and Sojakova~\cite{awodeyGamSoja_hoAlgs} for inductive types, and by Sojakova~\cite{DBLP:journals/corr/Sojakova14} for some higher inductive types.

However, category theory in HoTT is subtle.
The ``obvious'' na\"ive definition of a category without truncation (sometimes called a \emph{wild category}; \Cref{def:wild-cats}) is not a well-behaved notion; for example, the slice of a wild category is not a wild category anymore.
The underlying reason is that the identity and associativity equalities do not behave like laws, 
but like higher morphisms in a higher category where coherences are required. 
One approach to higher categories in HoTT is discussed in~\cite{Capriotti2017}.
Alternatively, the \emph{univalent categories} of~\cite{ahrens_rezk} restrict the truncation levels to avoid the issue.
For us, truncating is not a suitable strategy since it would not allow us to prove our general result.

Although not well-behaved in general, wild categories are still a useful tools in this paper.
We do \emph{not} think of them as ``bad ordinary categories'' but instead as an approximation to $(\infty,1)$-categories, 
where most of the (higher) data is omitted.
However, since none of our constructions require us to actually \emph{use} the omitted data, we get away with this.
Most importantly, we can talk about the concept of (homotopy) initiality without ever referring to higher morphisms.
Technically, we do not even need associativity; it could be excluded from the following definition without consequences for the rest of the paper.

\begin{definition}[wild categories and initiality] \label{def:wild-cats}
 A \emph{wild category} $\AA$, for simplicity henceforth simply \emph{category}, consists of a type $\bproj \AA $ of objects; for $X,Y : \bproj \AA $, a type $\AA(X,Y)$ of morphisms; a composition operator $\circ$ and identities 
 in the obvious way, together with the two standard equalities for the identies and one equality which states that $\circ$ is associative. 
 An object $X$ is called \emph{initial} if, for every object $Y$, the type $\AA(X,Y)$ is contractible (i.e.\ equivalent to the unit type).
\end{definition}

For the whole section, let us assume that a type $A$ together with $a_0 : A$ and a relation $\sim$ is given.
Our main category of interest is the following:

\begin{definition} \label{def:cat-C-based}
 The category $\CC$ is defined as follows.
 Objects in $\bproj \CC $ are ``pointed type families respecting $\sim$'', i.e.\ triples $(K,r,e)$ of the types
 \begin{align}
  & K : A \to \UU \\
  & r : K(a_0) \\
  & e : \Pi \{b,c : A\}. (b \sim c) \to K(b) \simeq K(c).
 \end{align}
 Morphisms are ``pointed fibrewise functions''.
 Explicitly, a morphism in $\CC((K,r,e), (K',r',e'))$ is a triple $(f,\delta,\gamma)$:
 \begin{align}
  & f : \Pi(b : A). K(b) \to K'(b) \\
  & \delta : f_{a_0}(r) = r'\\
  & \gamma: \Pi\{b,c : A\},(s : b \sim c). e'(s) \circ f_b = f_c \circ e(s)
 \end{align}
 Here, $\gamma$ is an equality witnessing that, for any $s : b \sim c$, the following square commutes:
 \begin{equation} \label{eq:commuting-square}
  \begin{tikzpicture}[x=2.5cm,y=-1.1cm,baseline=(current bounding box.center)]
   \node (Kab) at (0,0) {$K(b)$};
   \node (Kac) at (1,0) {$K(c)$};
   \node (Kpab) at (0,1) {$K'(b)$};
   \node (Kpac) at (1,1) {$K'(c)$};
  
   \draw[->] (Kab) to node [above] {$\scriptstyle e(s)$} (Kac);
   \draw[->] (Kpab) to node [above] {$\scriptstyle e'(s)$} (Kpac);
   \draw[->] (Kab) to node [left] {$\scriptstyle f_b$} (Kpab);
   \draw[->] (Kac) to node [right] {$\scriptstyle f_c$} (Kpac);
  \end{tikzpicture}
 \end{equation}
 The remaining components (identities, composition, equations) are straightforward to define.
 For example, identities are given as $(\lambda b. \id, \refl_{r^i}, \lambda s. \refl_{e^i(s)})$ and composition by
 \begin{equation} \label{eq:compose-in-C}
  (f', \delta', \gamma') \circ (f, \delta, \gamma) \defeq (\lambda b. ({f_b}' \circ f_b), \ap_{f'_{a_0}}(\delta) \ct \delta', \gamma' \circ \gamma),
 \end{equation}
 where the last bit is given by pasting two vertically neighboring squares \eqref{eq:commuting-square} (we do not think that writing down the full type-theoretic expression for this offers much insight).
\end{definition}

A variation of \Cref{thm:mainresult-based}, this time not as induction but as non-dependent elimination principle with uniqueness, can now be stated as follows:

\begin{theorem}[initiality of coequalizer equality] \label{thm:mainresult-cat-based}
 Consider the object $(K^i, p^i, e^i)$ of $\CC$, where the first part is given by
 \begin{equation}
  K^i (b) \defeq ([a_0] = [b]),
 \end{equation}
 i.e.\ $K^i$ is given by equality in the coequalizer $\quot$.
 The point is given by
 \begin{equation}
  r^i \defeq \refl_{[a_0]}.
 \end{equation}
 For every $s : b \sim c$, the component $e^i(s)$ is the equivalence between $([a_0] = [b])$ and $([a_0] = [c])$ which is given by composition with $\glue(s)$;
 we simply write
 \begin{equation}
  e^i(s) \defeq \_ \ct \glue(s).
 \end{equation}
 Then, our statement is: The object $(K^i, p^i, e^i)$ is initial in the category $\CC$.
\end{theorem}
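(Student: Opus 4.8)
The plan is to prove initiality by passing to an equivalent wild category in which the corresponding statement is immediate, and then transporting initiality along the equivalence. Concretely, I would introduce the wild category $\DD$ of \emph{pointed type families over the coequalizer}: an object is a pair $(C, c_0)$ with $C : \quot \to \UU$ and $c_0 : C([a_0])$; a morphism $(C, c_0) \to (C', c_0')$ is a fibrewise map $\bar f : \Pi(x : \quot).\, C(x) \to C'(x)$ together with a proof $\bar f_{[a_0]}(c_0) = c_0'$; identities and composition are the evident ones. In $\DD$ the object $P^i \defeq \bigl(\lambda x.\,([a_0] = x),\ \refl_{[a_0]}\bigr)$ is initial: a morphism out of $P^i$ into $(C, c_0)$ is a fibrewise map $([a_0] = x) \to C(x)$ sending $\refl_{[a_0]}$ to $c_0$, and since $\Sigma(x : \quot).\,([a_0] = x)$ is contractible with centre $([a_0], \refl)$, currying exhibits the unconstrained type of such maps as $C([a_0])$, so the constrained type is the singleton $\Sigma(c : C([a_0])).\,(c = c_0)$. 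This is just based path induction together with its uniqueness, packaged categorically.

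Next I would build an equivalence of wild categories $F : \CC \to \DD$ taking $(K^i, r^i, e^i)$ to $P^i$. On objects, $F(K, r, e) \defeq (\tilde K, r)$, where $\tilde K : \quot \to \UU$ is defined by coequalizer recursion from $K : A \to \UU$ and the paths $\mathsf{ua}(e(s)) : K(b) = K(c)$. By univalence together with the recursion principle and its uniqueness for $\UU$-valued maps out of $\quot$, this assignment is surjective on objects on the nose (coequalizer recursion into $\UU$ is inverse to $C \mapsto (C \circ [-],\ \idtoeqv \circ \ap_C \circ \glue)$), and the computation rule of $\mathsf{ua}$ gives $\transport^{\tilde K}(\glue(s)) = e(s)$. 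On morphisms, from $(f, \delta, \gamma) : (K, r, e) \to (K', r', e')$ I would produce $\bar f : \Pi(x : \quot).\,\tilde K(x) \to \tilde K'(x)$ by coequalizer induction: at $[b]$ it is $f_b$, and the datum required over $\glue(s)$ is an equality $\transport^{x \mapsto \tilde K(x) \to \tilde K'(x)}(\glue(s))(f_b) = f_c$, which, using $\transport^{\tilde K}(\glue(s)) = e(s)$ and likewise for $\tilde K'$, unfolds to $e'(s) \circ f_b \circ e(s)^{-1} = f_c$ — precisely the commuting square $\gamma$ of \eqref{eq:commuting-square}; the point-preservation $\bar f_{[a_0]}(r) = r'$ is $\delta$. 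The inverse restricts $\bar f$ to points and reads $\gamma$ off the $\glue(s)$-component of $\bar f$, and the two round-trips are identities by the $\beta$- and $\eta$-rules of the coequalizer eliminators. Finally, $\tilde{K^i}$ and $\lambda x.\,([a_0] = x)$ agree on points and have the same transport along $\glue(s)$ (both are $\_ \ct \glue(s)$), so uniqueness of $\UU$-valued recursion identifies them; with $r^i \equiv \refl_{[a_0]}$ this gives $F(K^i, r^i, e^i) = P^i$.

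To conclude, since $F$ is surjective on objects and induces equivalences $\CC(X, Y) \simeq \DD(FX, FY)$ on all hom-types, contractibility of every $\DD(P^i, Z)$ transfers to contractibility of every $\CC\bigl((K^i, r^i, e^i), Y\bigr)$, which is the claim. Unwinding the equivalences, the centre of contraction of $\CC\bigl((K^i, r^i, e^i), (K, r, e)\bigr)$ is the morphism whose underlying map is $q \mapsto \transport^{\tilde K}(q, r)$.

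I expect the main obstacle to be the morphism half of the equivalence $F$: one has to compute transport in the ``hom family'' $x \mapsto (\tilde K(x) \to \tilde K'(x))$ over $\glue(s)$ carefully enough to recognize it as the naturality square $\gamma$, and then verify that the round-trips between morphisms in $\CC$ and in $\DD$ collapse by the (propositional) computation rules of the coequalizer eliminators. This is routine but coherence-heavy bookkeeping — exactly the part worth entrusting to a proof assistant.
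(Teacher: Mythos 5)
Your proposal is correct and follows essentially the same route as the paper: the same auxiliary wild category $\DD$ of pointed families over $\quot$, initiality of $\bigl(\lambda x.([a_0]=x),\refl\bigr)$ there by singleton contraction, an equivalence of categories between $\CC$ and $\DD$ built from the coequalizer eliminator plus univalence (with the transport-in-the-hom-family computation over $\glue(s)$ playing the role of the paper's auxiliary path-induction lemma), and transfer of initiality along it. The only difference is presentational: you construct the functor $\CC \to \DD$ by recursion and invoke uniqueness of $\UU$-valued recursion, whereas the paper constructs the inverse direction (restriction along $[-]$ and $\ap$ on $\glue$) and shows it is an equivalence via its ``induction as equivalence'' lemma.
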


\Cref{subsec:Initiality} is devoted to the proof of this theorem, requiring various constructions and lemmas.

\subsection{Initiality of Coequalizer Equality} \label{subsec:Initiality}

In order to prove \Cref{thm:mainresult-cat-based}, we consider 
a second category which we call $\DD$.
We will then show that $\CC$ and $\DD$ are isomorphic.
The point is that it is very easy to find the initial object in $\DD$, and, via the isomorphism, it can easily be transported to $\CC$ .
A useful technical tool is the formulation of coequalizer induction as an equivalence, which is what we start with.

\begin{lemma}[coequalizer induction as equivalence] \label{lemma:quot-ind-equiv}
 Given a type family $P : \quot \to \UU$, there is a canonical map from the type
 \begin{equation}\label{eq:quot-ind-1}
  \Pi(x : \quot). P(x)
 \end{equation}
 to the type
 \begin{equation}\label{eq:quot-ind-2}
 \begin{alignedat}{2}
  \Sigma (&f : \Pi(a : A). P[a]). \\
  &\Pi\{a,b : A\},(s : a \sim b). f(a) =_{\glue(s)} f(b)
 \end{alignedat}
 \end{equation}
 mapping $g$ to the pair $(g \circ [-], \lambda s. \mathsf{apd}_g(\glue(s)))$.
 This canonical map is an equivalence.
\end{lemma}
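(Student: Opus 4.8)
The plan is to produce an explicit quasi-inverse to the canonical map, namely the coequalizer induction principle itself (\Cref{principle:quot-induction}): a pair $(f,e)$ in the type \eqref{eq:quot-ind-2} is sent to $\mathsf{ind}_{P,f,e} : \Pi(x:\quot).P(x)$. Writing $\Phi$ for the canonical map of the lemma and $\Psi(f,e) \defeq \mathsf{ind}_{P,f,e}$, it then suffices to homotope the two composites $\Phi\circ\Psi$ and $\Psi\circ\Phi$ to the respective identities; both are instances of the standard pattern that a (higher) inductive type's dependent eliminator, together with its computation rules, enjoys the corresponding dependent universal property.

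For $\Phi\circ\Psi$: by definition $\Phi(\Psi(f,e)) = \big(\mathsf{ind}_{P,f,e} \circ [-],\ \lambda s.\, \mathsf{apd}_{\mathsf{ind}_{P,f,e}}(\glue(s))\big)$. The two $\beta$-rules of \Cref{principle:quot-induction} supply, for each $a:A$, an equality $\mathsf{ind}_{P,f,e}([a]) = f(a)$, and for each $s : a\sim b$ an equality $\mathsf{apd}_{\mathsf{ind}_{P,f,e}}(\glue(s)) = e(s)$ lying over the former. By function extensionality these assemble into an equality of first components, and a $\Sigma$-path computation — using the description of transport along a homotopy in the dependent type $\lambda h.\,\Pi\{a,b\},(s:a\sim b).\, h(a) =_{\glue(s)} h(b)$ — promotes it to $\Phi(\Psi(f,e)) = (f,e)$. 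This is routine bookkeeping; if one arranges (as in the formalization, cf.\ the remark after \Cref{thm:mainresult-based}) for the first $\beta$-rule to hold judgmentally, the first component is literally $f$ and only the second component needs attention.

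For $\Psi\circ\Phi$, we must show that every $g : \Pi(x:\quot).P(x)$ satisfies $\mathsf{ind}_{P,\, g\circ[-],\, \lambda s.\mathsf{apd}_g(\glue(s))} = g$; this dependent uniqueness (``$\eta$'') principle is the only nontrivial step. By function extensionality it is enough to inhabit $\Pi(x:\quot).\, \mathsf{ind}_{P,g\circ[-],\dots}(x) = g(x)$, which we do by applying \Cref{principle:quot-induction} once more, now to the family $Q(x) \defeq \big(\mathsf{ind}_{P,g\circ[-],\dots}(x) = g(x)\big)$. The point case asks for $\mathsf{ind}_{\dots}([a]) = g([a])$, which is exactly the first $\beta$-rule (and is $\refl$ in the judgmental variant). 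The $\glue$ case asks, for $s:a\sim b$, for a path over $\glue(s)$ in $Q$ between the two point-case witnesses; unfolding $\transport^{Q}(\glue(s),-)$ via the standard characterisation of transport in a type of paths (e.g.\ \cite{hott-book}, Thm.~2.11.3--4), this reduces — modulo whiskering by the point-case equalities — to the equation
\[
  \mathsf{apd}_{\mathsf{ind}_{P,g\circ[-],\dots}}(\glue(s)) \;=\; \mathsf{apd}_g(\glue(s)),
\]
which is precisely the second $\beta$-rule. Hence $\Phi$ is an equivalence with inverse $\Psi$.

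The single delicate point is this $\glue$ case: it lives in a path type over a path, so one must apply the transport-in-path-types lemma correctly and track the inverses and whiskerings it introduces; everything else is mechanical. Note that specialising $P$ to a constant family recovers the analogous statement for coequalizer \emph{recursion}, which is the form in which \Cref{lemma:quot-ind-equiv} will be applied when constructing the category $\DD$.
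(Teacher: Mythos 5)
Your proof is correct and takes essentially the same approach the paper sketches: the $\beta$-rules of \Cref{principle:quot-induction} exhibit $\mathsf{ind}$ as a section of the canonical map, and the uniqueness ($\eta$) property promoting ``section'' to ``inverse'' is obtained by a further application of the induction principle to the family of pointwise equalities. The paper leaves all of this implicit (``this easily follows from the standard induction principle''), so your write-up is simply a detailed version of the same argument.
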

Note that $\mathsf{apd}$ is the ``dependent $\ap$ function''~\cite{hott-book}.
\begin{proof}
 The standard induction principle, given as \Cref{principle:quot-induction} above, 
 states that there is a function from \eqref{eq:quot-ind-2} to \eqref{eq:quot-ind-1} with $\beta$-rules
 that essentially amount to stating that this function is a section of the canonical map above.
 \Cref{lemma:quot-ind-equiv} replaces ``section'' by ``inverse''.
 This easily follows from the standard induction principle.
 We are not the first to make this observation: a small variation of the lemma is already present in the Lean library~\cite{floris:dependent-lemma-lean}.
\end{proof}

\begin{remark}
 Note that \Cref{lemma:quot-ind-equiv}
 crucially depends on the ``non-recursiveness'' of $\quot$.
 For example, the analogous statement for the natural numbers $\N$ is false (i.e.\ assuming it leads to a contradiction).
\end{remark}


In line with the strategy outlined above, we further consider the following category $\DD$:

\begin{definition}[category $\DD$]
 $\DD$ is the category of pointed families over $\quot$.
 Explicitly, objects in $\bproj {\DD}$ are pairs $(L,p)$ as in
 \begin{align}
  & L : \quot \to \UU \\
  & p : L([a_0]),
 \end{align}
 and morphisms in $\DD((L,p), (L',p'))$ are pairs $(g,\epsilon)$ of types 
 \begin{align}
  & g : \Pi(x : \quot). L(x) \to L'(x) \label{eq:D0-morph-g}  \\
  & \epsilon : g(p) = p' \label{eq:D0-morph-e}
 \end{align}
 Again, the remaining components of the category are defined in the straightforward way.
\end{definition}

The connection between $\CC$ and $\DD$ is as follows:

\begin{lemma} \label{lem:cats-are-iso}
 The two categories are isomorphic, in the following sense.
 There is a map
 \begin{equation} \label{eq:Phi0-folded}
  \Phi_0 : \bproj{\DD} \to \bproj{\CC}
 \end{equation}
 which is an equivalence, and there is also a map
 \begin{equation} \label{eq:Phi1-folded}
  \Phi_1 : \Pi (X,Y : \bproj{\DD}). \DD(X,Y) \to \CCCC(\Phi_0(X),\Phi_0(Y))
 \end{equation}
 such that each $\Phi_1(X,Y)$ is an equivalence.
 Moreover, identities and compositions are preserved by the equivalence.
\end{lemma}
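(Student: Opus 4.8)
The plan is to realize both $\Phi_0$ and $\Phi_1$ as ``restriction along the point constructor $[-]$'', with the equivalence data of a $\CC$-object coming from transport along $\glue$, and then to check that these maps are equivalences by factoring them through \Cref{lemma:quot-ind-equiv} together with univalence and the standard computations of transport in a constant family and in a function-type family.

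\emph{Objects.} I would set $\Phi_0(L,p) \defeq \big(L\circ[-],\ p,\ \lambda s.\,\transport^{L}(\glue(s))\big)$; here $\transport^{L}(\glue(s)) : L([b]) \to L([c])$ is an equivalence (its inverse is $\transport^{L}(\glue(s)^{-1})$), and $p : L([a_0])$ typechecks as a point of $(L\circ[-])(a_0)$ since the two types are definitionally equal. To see that $\Phi_0$ is an equivalence, I instantiate \Cref{lemma:quot-ind-equiv} with the constant family $P\defeq\lambda\_.\,\UU$: this identifies $\quot\to\UU$ with $\Sigma(f:A\to\UU).\,\Pi\{a,b\},(s:a\sim b).\,\big(f(a) =_{\glue(s)} f(b)\big)$, and since transport in a constant family is the identity, the path-over reduces to $f(a) = f(b)$ in $\UU$, which univalence turns into $f(a)\simeq f(b)$. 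Adjoining the $\Sigma$ over the point (which is $L([a_0]) \equiv (L\circ[-])(a_0)$ on one side and $f(a_0)$ on the other) and reassociating the $\Sigma$'s, I obtain a chain of equivalences $\bproj\DD \simeq \bproj\CC$ whose underlying map is homotopic to $\Phi_0$.

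\emph{Morphisms.} For $X=(L,p)$, $Y=(L',p')$, I send a $\DD$-morphism $(g,\epsilon)$ to $\big(\lambda b.\,g_{[b]},\ \epsilon,\ \lambda s.\,\gamma_g(s)\big)$, where $\gamma_g(s)$ is the commuting square $\transport^{L'}(\glue(s))\circ g_{[b]} = g_{[c]}\circ\transport^{L}(\glue(s))$ extracted from $\mathsf{apd}_g(\glue(s))$ via the standard characterization of dependent paths in a function-type family. This lands in $\CC(\Phi_0 X,\Phi_0 Y)$ precisely because that characterization turns a path-over in $\lambda x.\,(L(x)\to L'(x))$ into exactly the $\gamma$-component required in \Cref{def:cat-C-based}. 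That $\Phi_1(X,Y)$ is an equivalence then follows by instantiating \Cref{lemma:quot-ind-equiv} with $P\defeq\lambda x.\,(L(x)\to L'(x))$ --- a fibrewise map is a section of $P$ --- and composing with the same dependent-path characterization on the morphism side and the definitional identification of $(\lambda b.g_{[b]})_{a_0}$ with $g_{[a_0]}$ on the point side; the resulting composite of equivalences is homotopic to the displayed map.

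\emph{Functoriality.} Identities are preserved on the nose: $(\lambda x.\id,\refl_p)$ goes to $(\lambda b.\id,\refl_p,\lambda s.\refl)$. For composition the first two components compose strictly, so it remains to check that the square $\gamma_{g'\circ g}(s)$ obtained from $\mathsf{apd}_{g'\circ g}(\glue(s))$ equals the vertical pasting of $\gamma_g(s)$ and $\gamma_{g'}(s)$ used in \eqref{eq:compose-in-C}; this is a general fact relating $\mathsf{apd}$ of a composite of fibrewise maps to the pasting of the individual naturality squares, which I would prove by path induction on $\glue(s)$.

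I expect the main obstacle to be bookkeeping rather than anything conceptual: pinning down the exact forms of transport in a constant family, in a function-type family, and along an inverse path, checking their interaction with univalence so that the ``obvious'' maps $\Phi_0,\Phi_1$ really are the composites of equivalences above, and --- the fiddliest point --- verifying the pasting-of-squares identity in the composition clause. Once \Cref{lemma:quot-ind-equiv} is in hand, no further input about $\quot$ is needed.
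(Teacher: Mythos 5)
Your proposal is correct and follows essentially the same route as the paper's proof: both factor $\Phi_0$ and $\Phi_1$ through \Cref{lemma:quot-ind-equiv} instantiated at the constant family $\UU$ (plus univalence) and at $P(x)\defeq L(x)\to L'(x)$, use the path-induction characterization of dependent paths in a function-type family for the $\gamma$-component (the paper's \Cref{lem:aux-lemma-function-eq}), and check functoriality by the same generalize-then-path-induct argument. The only cosmetic differences are that you write the equivalence data as $\transport^{L}(\glue(s))$ rather than $\idtoeqv(\ap_L(\glue(s)))$ (these agree by the standard transport-vs-$\idtoeqv$ lemma) and that ``path induction on $\glue(s)$'' should of course be read as first generalizing $\glue(s)$ to an arbitrary equality, as you implicitly intend.
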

\begin{proof}
 Let us unfold the type in \eqref{eq:Phi0-folded};
 this is the type of the equivalence $\Phi_0$ that we \emph{want} to construct:
 \begin{equation} \label{eq:Phi0}
  \begin{alignedat}{2}
   &\quad && \Sigma(L : \quot \to \UU). L([a_0]) \\
   \simeq &&& \Sigma (K : A \to\UU). \Sigma (p : K(a_0)). \\
           &&& \phantom{\Sigma (} e : \Pi\{b,c : A\},(s : b \sim c). K(b) \simeq K(c)
  \end{alignedat}
 \end{equation}
 \Cref{lemma:quot-ind-equiv} gives us a tool to construct equivalences.
 Let us use that lemma with the constant family $P(x) \defeq \UU$; this makes use of the fact that the lemma works on all universe levels.
 The lemma then gives us, simply by replacing $P(x)$ by $\UU$, renaming variables, and using that we are now in the non-dependent special case, the following equivalence $\varphi_0$:
 \begin{equation} \label{eq:phi0}
  \begin{alignedat}{2}
   &\quad &&\phantom{\Sigma}(\quot \to \UU) \\
   \simeq &&& \Sigma (K : A \to\UU). \\
           &&& \phantom{\Sigma (} e : \Pi\{b,c : A\},(s : b \sim c). K(b) = K(c)
  \end{alignedat}
 \end{equation}
 Moreover, we know how $\varphi_0$ is defined, namely by 
 \begin{equation} \label{eq:phi0-compute}
  \varphi_0(L) \defeq (L \circ [-], \lambda s. \ap_L(\glue(s)))
 \end{equation}
 (since we are in the non-dependent case, $\mathsf{apd}$ became $\ap$).

 We claim that the function $\Phi_0$ of type \eqref{eq:Phi0} can be constructed from the function $\varphi_0$ of type \eqref{eq:phi0} via two small modifications:
 \begin{itemize}
  \item  First, if we compare the domains of $\Phi_0$ with the domain of $\varphi_0$, and the codomain of $\Phi_0$ with the codomain of $\varphi_0$,
 we see that the ``point-component'' is missing from $\varphi_0$, i.e.\ the $\Sigma$-component $L([a_0])$ is missing in its domain and $(p : K(a_0))$ is missing in its codomain.
 However, we can just extend domain and codomain with this $\Sigma$-component.
 The equation \eqref{eq:phi0-compute} tells us that this extension is completely trivial, since $K \equiv L \circ [-]$, i.e.\ we extend $\varphi_0$ with the identity on one additional new component.
  \item The codomain of this extended $\varphi_0$ only differs from the codomain of $\Phi_0$ in that the component $e$ in \eqref{eq:phi0} concludes with $(K(b) = K(c))$, while the component $e$ in \eqref{eq:Phi0} concludes with $(K(b) \simeq K(c))$.
  To close this gap, we can use 
  the canonical function $\idtoeqv$ which turns an equality between types into an equivalence (cf.~\cite{hott-book}), and of which the univalence axiom ensures that it is an equivalence itself. 
 \end{itemize}
 This concludes the construction of the equivalence $\Phi_0$,
 and, using \eqref{eq:phi0-compute}, we can write down how the function part of it computes when applied to a pair $(L,p)$:
 \begin{equation} \label{eq:Phi0-compute}
  \Phi_0(L,p) \equiv \big(L \circ [-], p, \lambda s. \idtoeqv(\ap_L(\glue(s)))\big)
 \end{equation}

 The construction of $\Phi_1$ as in \eqref{eq:Phi1-folded} is slightly more subtle since it depends on $\Phi_0$, but works in essentially the same way.
 Assume we are given $(L,p)$ and $(L',p')$ in $\bproj {\DD}$.
 We unfold the type of $\Phi_1((L,p),(L',p'))$ as in~\eqref{eq:Phi1-folded}, making use of equation \eqref{eq:Phi0-compute}.
 This gives us the type that we \emph{want} to inhabit:
 \begin{equation} \label{eq:Phi1}
  \begin{alignedat}{2}
   &\quad && \Sigma \left(g : \Pi(x : \quot). L(x) \to L'(x)\right). \\
           &&& \phantom{\Sigma(} \epsilon : g(p) = p' \\
   \simeq &&& \Sigma \left(f : \Pi(b : A). L([b]) \to L'([b])  \right). \\
           &&& \Sigma (\delta: f(p) = p'). \\
           &&& \phantom{\Sigma (} \gamma : \Pi\{b,c : A\},(s : b \sim c). \\
           &&& \phantom{=========} \idtoeqv(\ap_L(\glue(s))) \circ f(b)  \\
           &&& \phantom{=======} = f(c) \circ \idtoeqv(\ap_{L'}(\glue(s)))
  \end{alignedat}
 \end{equation}
 Let us use \Cref{lemma:quot-ind-equiv} again, this time with the family $P(x) \defeq \left(L(x) \to L'(x)\right)$.
 Simply by plugging this into \Cref{lemma:quot-ind-equiv} (and renaming variables), we get the following equivalence $\varphi_1$:
 \begin{equation} \label{eq:phi1}
  \begin{alignedat}{2} 
   &\quad &&(\Pi(x : \quot). L(x) \to L'(x)) \\
   \simeq &&& \Sigma (f : \Pi(b : A). L([b]) \to L'([b])). \\
           &&& \phantom{\Sigma (} \gamma : \Pi\{b,c : A\},(s : b \sim c). f(b) =_{\glue(s)} f(c)
  \end{alignedat}
 \end{equation}
 Similar to what we have done before, we have to use \eqref{eq:phi1} to derive \eqref{eq:Phi1}; and as before, there are two steps.
 First, we need to add the equation for the points (i.e.\ the components $\epsilon$ and $\delta$), but this is as simple and direct as before; we do not spell out the details.
 
 Second, and more interestingly, we have to show that the $\gamma$'s of \eqref{eq:Phi1} and \eqref{eq:phi1} coincide (i.e.\ that their types are equivalent).
 As very often in HoTT when we want to prove something for a specific equality (here $\glue(s)$), the easiest way to do this is to generalize the statement and formulate it in terms of an \emph{arbitrary} equality, which then allows path induction.
 The only red herring here is that $f$ is a family of functions; but, since it is indexed over $A$ and the equality in question lives in $\quot$, we cannot make use of this.
 The equivalence follows from \Cref{lem:aux-lemma-function-eq} below, by using $f(b)$ for $h$, and $f(c)$ for $k$, and $\glue(s)$ for $q$.
 
 It is easy to check that $\Phi_1$ preserves identities and compositions of morphisms.
\end{proof}

\begin{lemma} \label{lem:aux-lemma-function-eq}
 Let $Z$ be a type, $F, G : Z \to \UU$ two type families, $x,y : Z$ and $q : x = y$ elements and an equality.
 Assume we have functions $h : F(x) \to G(x)$ and $k : F(y) \to G(y)$.
 Then, the type $(h =_q k)$ is equivalent to the type 
 \begin{equation}
  \idtoeqv(\ap_G(q)) \circ h = k \circ \idtoeqv(\ap_F(q)).  
 \end{equation}
\end{lemma}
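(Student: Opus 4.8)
The plan is to prove this by path induction on $q$. Since $q : x = y$ is an arbitrary equality in $Z$ (the statement has already been deliberately generalized away from the specific case $\glue(s)$ precisely so that $J$ becomes available), we may assume $y \defeq x$ and $q \defeq \refl_x$. It then suffices to establish the equivalence in this special case and observe that both sides reduce to something recognizable. On the left, $h =_{\refl_x} k$ is just the ordinary equality $h = k$ of functions $F(x) \to G(x)$, since transport along $\refl$ is the identity. On the right, $\ap_G(\refl_x) \equiv \refl_{G(x)}$ and $\ap_F(\refl_x) \equiv \refl_{F(x)}$, and $\idtoeqv(\refl_{G(x)})$ is (definitionally, or at worst up to a trivial equivalence) the identity equivalence on $G(x)$, and likewise for $F(x)$; so the right-hand type becomes $\id_{G(x)} \circ h = k \circ \id_{F(x)}$, which is again $h = k$. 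Hence both types are (equivalent to) $h = k$, and we are done.

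Concretely, I would structure the argument as follows. First, state the family to which $J$ is applied: for $y : Z$ and $q : x = y$, let $E(y,q)$ be the type $\bigl(h =_q k'\bigr) \simeq \bigl(\idtoeqv(\ap_G(q)) \circ h = k' \circ \idtoeqv(\ap_F(q))\bigr)$, where $k'$ now ranges over $F(y) \to G(y)$ — note that $k$ must be generalized along with $y$ so that the motive is well-formed, exactly as in based path induction. Second, by $J$ it suffices to provide an element of $E(x, \refl_x)$, i.e. to give, for every $k' : F(x) \to G(x)$, an equivalence $\bigl(h = k'\bigr) \simeq \bigl(\idtoeqv(\refl_{G(x)}) \circ h = k' \circ \idtoeqv(\refl_{F(x)})\bigr)$. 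Third, compute: $\ap_G$ and $\ap_F$ applied to $\refl$ are $\refl$, and $\idtoeqv(\refl)$ is the identity equivalence, whose underlying function is $\id$; composing with the identity on either side does nothing. If one is working up to judgmental equality these reductions are literally definitional and the required equivalence is the identity equivalence; if not, one packages the finitely many propositional computation rules ($\ap$ on $\refl$, $\idtoeqv$ on $\refl$, left/right unit laws for $\circ$) into a chain of equalities between types and transports the identity equivalence along it. Either way the fibre over $\refl_x$ is inhabited, and $J$ concludes the proof.

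There is essentially no serious obstacle here: once the statement is phrased for an arbitrary $q$, path induction does all the work, and the only thing to be slightly careful about is the bookkeeping — remembering to generalize $k$ simultaneously with $y$ (otherwise the motive does not typecheck), and recalling that $\idtoeqv$ sends $\refl$ to the identity equivalence so that the target of the induction is visibly trivial. The mild overhead, if one refuses to rely on judgmental equalities, is assembling the handful of coherence steps ($\ap{}(\refl) = \refl$, $\idtoeqv(\refl) = \id$-equivalence, unit laws of composition) into one equality of types; this is routine and carries no real content.
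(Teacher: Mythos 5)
Your proof is correct and takes essentially the same approach as the paper, which simply says ``By induction, we can assume $q \equiv \refl$, in which case both expressions evaluate to $(h = k)$.'' You spell out the bookkeeping (generalizing $k$ along with $y$, and the computation rules for $\ap$ and $\idtoeqv$ on $\refl$) that the paper leaves implicit, but the argument is the same.
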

\begin{proof}
 By induction, we can assume $q \equiv \refl$, in which case both expressions evaluate to $(h = k)$.
\end{proof}

Having shown \Cref{lem:cats-are-iso}, which constitutes the main technical difficulty of the proof of \Cref{thm:mainresult-cat-based},
we can work with $\DD$ instead of $\CCCC$.
The benefit is that it is easy to find the initial object of $\DD$:

\begin{lemma} \label{lem:D0-init}
 Let us consider the object $(L^i,p^i)$ of $\DD$, given as follows:
 \begin{align}
  & L^i(x) \defeq ([a_0] = x) \\
  & p^i \defeq \refl_{[a_0]}.
 \end{align}
 This object is initial in $\DD$.
\end{lemma}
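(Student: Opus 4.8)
The plan is to prove initiality directly: for every object $(L',p')$ of $\DD$, I would show that the morphism type
$\DD((L^i,p^i),(L',p')) \equiv \Sigma\bigl(g : \Pi(x:\quot).([a_0]=x)\to L'(x)\bigr).\, g([a_0],\refl_{[a_0]}) = p'$
is contractible. The point to exploit is that this is exactly the shape of Paulin-Mohring (based) path induction, applied to the base point $[a_0] : \quot$.

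Concretely, I would first observe that the evaluation-at-$\refl$ map $\mathsf{ev} : \bigl(\Pi(x:\quot).([a_0]=x)\to L'(x)\bigr) \to L'([a_0])$, sending $g$ to $g([a_0],\refl_{[a_0]})$, is an equivalence. Its inverse sends $u : L'([a_0])$ to $\lambda x\,q.\,\transport^{L'}(q,u)$; one round-trip holds definitionally by the computation rule for $\transport$, and the other follows by function extensionality together with a single path induction on $q$. (This is the analogue, for the ordinary identity type, of the way \Cref{lemma:quot-ind-equiv} repackages an induction principle as an equivalence.) I would then transport the point condition across this equivalence: since $\mathsf{ev}(g)$ is definitionally the left-hand side of $g([a_0],\refl_{[a_0]}) = p'$, the standard fact that an equivalence $e : X \simeq Y$ and a family $P : Y \to \UU$ induce an equivalence $\bigl(\Sigma(x:X).P(e\,x)\bigr) \simeq \bigl(\Sigma(y:Y).P\,y\bigr)$ yields $\DD((L^i,p^i),(L',p')) \simeq \Sigma(u : L'([a_0])).(u = p')$. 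The latter is a based path space, hence contractible (with centre $(p',\refl_{p'})$), and therefore so is the former, which is initiality.

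I do not expect a genuine obstacle here -- the lemma is essentially a restatement of based path induction -- so the only thing to be careful about is bookkeeping, namely ensuring the point component $\epsilon : g([a_0],\refl_{[a_0]}) = p'$ is carried through the $\Sigma$ unchanged rather than getting transported along a nontrivial path. An equally short alternative that sidesteps the $\Sigma$-over-an-equivalence lemma is to exhibit the centre of contraction explicitly as $\bigl(\lambda x\,q.\,\transport^{L'}(q,p'),\ \refl_{p'}\bigr)$ and, given an arbitrary $(g,\epsilon)$, prove $g = \lambda x\,q.\,\transport^{L'}(q,p')$ by function extensionality and path induction on $q$ (the base case being precisely $\epsilon$), followed by the routine verification that the resulting path over this identification matches $\epsilon$ with $\refl_{p'}$.
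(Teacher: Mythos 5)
Your proof is correct and matches the paper's argument: the paper compresses it into ``apply singleton contraction twice,'' which is exactly your chain of observations that evaluation at $\refl$ is an equivalence (because $\Sigma(x:\quot).[a_0]=x$ is contractible), that this equivalence transports the $\Sigma$ to $\Sigma(u:L'([a_0])).u=p'$, and that the latter is a contractible based path space. The explicit-centre alternative you sketch at the end is just an unfolding of the same argument, so there is no genuine difference in route.
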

\begin{proof}
 Let $(L,p)$ be any other object.
 After unfolding the definition in (\ref{eq:D0-morph-g},\ref{eq:D0-morph-e}), 
 the type $\DD((L^i,p^i),(L,p))$ is given by
 \begin{equation}
  \begin{alignedat}{1}
  \Sigma\big(&g : \Pi(x : \quot). ([a_0] = x) \to L(x)\big).  \\
  & \epsilon: g([a_0], \refl) = p 
  \end{alignedat}
 \end{equation}
This type is contractible by applying ``singleton contraction'' twice:
first, we use that an element $x$ together with an equality $[a_0] = x$ form a contractible pair, simplifying the above type to $\Sigma(g : L([a_0]). g = p$;
and this type is clearly contractible.
\end{proof}

Having found the initial object in $\DD$, we transport it to $\CCCC$ in order to prove the categorical version of our main result, namely \Cref{thm:mainresult-cat-based}:

\begin{proof}[Proof of \Cref{thm:mainresult-cat-based}]
 Since $\Phi_1$ as constructed in \Cref{lem:cats-are-iso} preserves morphism spaces, $\Phi_0$ preserves the initial object.
 Thus, all we need to do is to use the object found in \Cref{lem:D0-init} and compute using \eqref{eq:Phi0-compute}.
 This gives us $K_0^i$ and $r_0^i$ immediately.
 The last component $e_0^i$ is correct by a standard ``path induction''-argument.
\end{proof}

\subsection{Derivation of the Induction Principle} \label{sec:get-ind-from-rec}

The main part of the derivation of the based induction principle (\Cref{thm:mainresult-based}) from the non-dependent based formulation (\Cref{thm:mainresult-cat-based}) is completely standard and follows known principles, cf.\ the work by Awodey, Gambino, and Sojakova \cite{awodeyGamSoja_hoAlgs}.
We use the ``total space'' construction to turn the dependent case into the non-dependent one.
Afterwards, we still need to derive the $\beta$-rules, and this is trickier; we use a small trick to ``strictify'' equations.
Let us restate the theorem which we want to prove:

\mainresultstatementbased*

\begin{proof}
 Assume $P$, $r$ and $e$ are given.
 The ``total space'' versions of these three components form an object $(\overline P, \overline r, \overline e)$ of the category $\CC$, and they are defined as follows:
 \begin{align}
  & \overline P : A \to \UU \\
  & \overline P(b) \defeq \Sigma(q : [a_0] = [b]). P(q) \\
  & \overline r : \overline{P}(a_0) \\
  & \overline r \defeq (\refl_{[a_0]}, r) \\
  & \overline e: \Pi\{b,c : A\}. (b \sim c) \to \overline P(b) \simeq \overline P(c) \\
  & \overline e (s) \defeq \left(\_ \ct \glue(s), e(\_,s,\_)\right). \label{eq:total-e-def}
 \end{align}
 Note that the last line \eqref{eq:total-e-def} implicitly uses that an equivalence between $\Sigma$-types can be constructed from a pair of equivalences for the first and second component.
 Explicitly, the function part of the equivalence $\overline e(s)$ maps 
 a given pair $(q,x)$ with $q : [a_0] = [b]$ and $x : P(q)$ to the pair $(q \ct \glue(s), e(q,s,x))$.

 We have a morphism from the initial object of $\CC$ to this newly constructed object (let's call it $(f, \delta, \gamma)$), but we also have the ``first projection'' into the other direction:
 \begin{align}
  & (f,\delta,\gamma) : \CC\big((K^i,p^i,e^i),(\overline P, \overline r, \overline e)\big)  \label{eq:morphism-from-initial} \\
  & (\lambda b. \projone,\refl_{r^i},\lambda s. \refl_{e^i(s)}) : \CC\big((\overline P, \overline r, \overline e),(K^i,p^i,e^i)\big)
 \end{align}
 It follows from initiality that the composition of these morphisms is the identity on the object $(K^i,p^i,e^i)$, i.e.\ we have a $\psi$ of the following type:
 \begin{equation} \label{eq:section-retraction}
 \begin{alignedat}{1}
  \psi: \; (\lambda b. \projone,\refl_{r^i},\lambda s. &\refl_{e^i(s)}) \circ (f, \delta, \gamma) \\
  & = (\lambda b. \id, \refl_{r^i}, \lambda s. \refl_{e^i(s)})
 \end{alignedat}
 \end{equation}
 In particular, given any $q : [a_0] = [b]$, we get an equality 
 \begin{equation} \label{eq:proj1-f}
  \psi^1_q : \projone(f_b(q)) = q
 \end{equation}
 and we can define:
 \begin{align}
  &\mathsf{ind}_{r,e}(q) : P(q) \\
  &\mathsf{ind}_{r,e}(q) \defeq \transport^{P}(\psi^1_q,\projtwo(f_b(q))). \label{eq:derived-ind-principle}
 \end{align}
 This defines the induction principle, but the two $\beta$-rules still need to be justified.
 The equality $\psi$ in \eqref{eq:section-retraction} consists of three parts, one for each component \cite[Thm 2.7.2]{hott-book}; let us write $(\psi^1, \psi^2, \psi^3)$ for them.
 The general idea is that, just as $\psi^1$ has allowed us to construct the induction principle \eqref{eq:derived-ind-principle}, $\psi^2$ allows us to show the first $\beta$-equation and $\psi^3$ gives us the second.
 The main difficulty here are the many \emph{transports}/\emph{pathovers} involved, since the types of $\psi^2$ and $\psi^3$ depend on $\psi^1$.
 The trick is to split $f$ into $(f^1, f^2)$ by setting ${f_b^1} \defeq \projone \circ f_b$, $f^2_b \defeq \projtwo \circ f_b$, and similarly split $\delta$ and $\gamma$.
 Using this, and calculating the left side of \eqref{eq:section-retraction}, we get
 \begin{equation}
  (\psi^1, \psi^2, \psi^3) : \; (f^1, \delta^1, \gamma^1) = (\lambda b. \id, \refl_{r^i}, \lambda s. \refl_{e^i(s)})
 \end{equation}
 Now, we can generalize the situation: we claim that, \emph{for all} $(\psi^1, f^1, \ldots)$, we can derive the induction principle plus two $\beta$-equalities.
 This formulation allows us to use based path induction on $(f^1, \psi^1)$ and assume that $f^1 \equiv \lambda b.\id$, $\psi_1 \equiv \refl_{\lambda b.\id}$.
 This lets the mentioned dependencies disappear and we get $\psi^2 : \delta^1 = \refl_{r^i}$ as well as $\psi^3 : \gamma^1 = \lambda s.\refl_{e^i(s)}$.
 In addition, \eqref{eq:derived-ind-principle} simplifies to $\mathsf{ind}_{r,e}(q) \defeq \projtwo(f_b(q))$.
 
 For the first $\beta$-equality, we unfold the type of $\delta$:
 \begin{equation}
  \delta : (\refl_{a_0}, \mathsf{ind}_{r,e}(\refl_{a_0})) = (\refl_{a_0}, r)
 \end{equation}
 We need to show that the second components are equal.
 From $\delta$, we get that the second components are equal when one is transported along the $\delta^1$,
 and from $\psi^1$, we get that this is a transport along $\refl$.

 The procedure for the second $\beta$-equation is similar. 
 The details are best seen by considering the following diagram:
 \begin{equation}
 \begin{tikzpicture}[x=3.5cm,y=-1.4cm,baseline=(current bounding box.center)]
  \node (A) at (0,0) {$[a_0]=[b]$};
  \node (C) at (1,0) {$\Sigma(q : [a_0]=[b]).P(q)$};
  \node (B) at (0,1) {$[a_0]=[c]$};
  \node (D) at (1,1) {$\Sigma(q : [a_0]=[c]).P(q)$};
  \node (E) at (.5,.5) {$\gamma$};
 
  \draw[->] (A) to node [left] {$\scriptstyle{\_ \ct \glue(s)}$} (B);
  \draw[->] (A) to node [above] {$\scriptstyle{f_b}$} (C);
  \draw[->] (B) to node [below] {$\scriptstyle{f_b}$} (D);
  \draw[->] (C) to node [right] {$\scriptstyle {\_ \ct \glue(s) , e(\_,s,\_)}$} (D);
 \end{tikzpicture}
 \end{equation}
 $\gamma$ says that this square commutes.
 Let us take some $q : [a_0]=[b]$ and see how it is mapped (using $f_1 \equiv \id$ and so on):
 \begin{equation}
 \begin{tikzpicture}[x=4cm,y=-1.7cm,baseline=(current bounding box.center)]
  \node (A) at (0,0) {$q$};
  \node (C) at (1,0) {$(q, \mathsf{ind}_{r,e}(q))$};
  \node (B) at (0,1) {$q\ct \glue(s)$};
  \node (D) at (1,1) {$(q \ct \glue(s), \mathsf{ind}(q \ct \glue(s)))$};
  \node (D2) at (1,.65) {$(q \ct \glue(s), e(q,s,\mathsf{ind}_{r,e}(q))$};
  
  \draw[|->] (A) to node {} (B);
  \draw[|->] (A) to node {} (C);
  \draw[|->] (B) to node {} (D);
  \draw[|->] (C) to node {} (D2);
 \end{tikzpicture}
 \end{equation}
 Here, $\gamma$ tells us that the two pairs at the bottom right are equal.
 As before, we need that their second components are equal; and analogously to what we did before, we use $\psi^3$ to see that this is the case.
\end{proof}

\section{Equality in Pushouts} \label{sec:pushouts-etc}

As discussed in the introduction, pushouts and coequalizers can easily be defined in terms of each other.
The standard representation in the HoTT literature as a higher inductive type, where we assume that types $L,M,N$ and functions $f : L \to M$ and $g : L \to N$ are given, is as follows:

\noindent
\begin{minipage}[t]{.28\textwidth}
\vspace*{-.8cm}
\begin{equation*} \label{eq:pout-def}
 \begin{aligned}
 \mathsf{data} & \; \pout L M N : \, \UU \; \mathsf{ } \\ 
 & \inl : M \to \pout L M N \\
 & \inr : N \to \pout L M N \\ 
 & \glue : \Pi(l : L). \inl(f(l)) = \inr(g(l))
 \end{aligned}
\end{equation*}
\end{minipage}
\begin{minipage}[c]{.2\textwidth}
  \begin{tikzpicture}[x=2cm,y=-1.cm,baseline=(current bounding box.center)]
   \node (A) at (0,0) {$L$};
   \node (C) at (1,0) {$N$};
   \node (B) at (0,1) {$M$};
   \node (D) at (1,1) {$\pout L M N$};
  
   \draw[->] (A) to node [left] {$\scriptstyle f$} (B);
   \draw[->] (A) to node [above] {$\scriptstyle g$} (C);
   \draw[->, dashed] (B) to node [below] {$\scriptstyle \inl$} (D);
   \draw[->, dashed] (C) to node [right] {$\scriptstyle \inr$} (D);
  \end{tikzpicture}
\end{minipage}

\vspace*{.3cm}

\noindent
We write $\inp: (M+N) \to \pout L M N$ for the map given by $(\inl, \inr)$.
To simplify notation, we keep the inclusions $i_1 : M \to (M+N)$ and $i_2 : N \to (M+N)$ implicit. 

Since pushouts are used a lot and play a vital role in the Seifert-van Kampen theorem (cf.~\Cref{sec:explicit}),
we want to state our main result explicitly for pushouts instead of coequalizers.
The proofs can straightforwardly be obtained by expressing the pushouts as coequalizers, as described in the introduction.\footnote{In Lean, this is simply a specialization.}

\begin{theorem}[induction for pushout equality] \label{thm:main-pushout}
 Assume $L, M, N, f, g$ are given as above, together with a point $n_0 : N$.
 Assume we are given families $P, Q$ and terms $r,e$ as follows:
 \begin{align}
  &P : \Pi\{m : M\}. (\inr(n_0) = \inl(m)) \to \UU \label{eq:P-mainresult-pushout}\\
  &Q : \Pi\{n : N\}. (\inr(n_0) = \inr(n)) \to \UU \label{eq:Q-mainresult-pushout} \\
  &r : Q(\refl_{\inr(n_0)}) \label{eq:r-mainresult-pushout} \\
  &e : \Pi(l : L),(q : \inr(n_0) = \inl(f(l))). \nonumber \\
  &\qquad P(q) \simeq Q(q \ct \glue(l)). \label{eq:e-mainresult-pushout}
 \end{align}
 Then, we can construct terms
 \begin{align}
  & \mathsf{ind}^P_{r,e} : \Pi\{m : M\},(q : \inr(n_0) = \inl(m)). P(q) \\
  & \mathsf{ind}^Q_{r,e} : \Pi\{n : N\},(q : \inr(n_0) = \inr(n)). Q(q)
 \end{align}
 with the following $\beta$-rules:
 \begin{align}
  & \mathsf{ind}^P_{r,e} (\refl_{\inr(n_0)}) = r \label{eq:beta1-mainresult-pushout}\\
  & \mathsf{ind}^Q_{r,e}(q \ct \glue(l)) = e (l,q,\mathsf{ind}^P_{r,e}(q))
 \end{align}
 \qed
\end{theorem}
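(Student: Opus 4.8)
The plan is to deduce this theorem from the coequalizer version already established as \Cref{thm:mainresult-based}, via the encoding of a pushout as a coequalizer recalled in the introduction. I would take $A \defeq M + N$ and let $\sim$ be the relation of \eqref{eq:pushout-from-coequ}, so that $\pout L M N$ may be taken to be $\specialquot{(M+N)}$ itself, with $\inl \defeq \lambda m.\,[i_1(m)]$, $\inr \defeq \lambda n.\,[i_2(n)]$ and $\glue(l) \defeq \glue\bigl((l,\refl_{f(l)},\refl_{g(l)})\bigr)$; for this presentation everything below is definitional, and an arbitrary presentation of the pushout is recovered by transporting the conclusion along its equivalence with $\specialquot{(M+N)}$ (in Lean this step disappears entirely, cf.\ the footnote above). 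The basepoint of \Cref{thm:mainresult-based} is $a_0 \defeq i_2(n_0)$.

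Next I would assemble the hypotheses of \Cref{thm:mainresult-based}. A type family over $M+N$ is the same datum as a family over $M$ together with a family over $N$, so $P$ and $Q$ combine into one family $P' : \Pi\{b:A\}.\,([a_0]=[b])\to\UU$ with $P'(i_1(m)) \defeq P(m)$ and $P'(i_2(n)) \defeq Q(n)$; note $[a_0]=[i_1(m)]$ is $\inr(n_0)=\inl(m)$ and $[a_0]=[i_2(n)]$ is $\inr(n_0)=\inr(n)$. The basepoint term is $r' \defeq r$, using $P'(\refl_{[a_0]}) \equiv Q(\refl_{\inr(n_0)})$. For the coherence datum $e' : \Pi\{b,c:A\},(q:[a_0]=[b]),(s:b\sim c).\,P'(q)\simeq P'(q\ct\glue(s))$ I would case-split on $b$ and $c$: unless $b=i_1(m)$ and $c=i_2(n)$ the type $b\sim c$ is empty, so there is nothing to supply; in the remaining case $s : \Sigma(l:L).(f(l)=m)\times(g(l)=n)$, and path induction on the two equality components lets me assume $m\equiv f(l)$, $n\equiv g(l)$, $s\equiv(l,\refl,\refl)$, whence $\glue(s)\equiv\glue(l)$ and the required $P'(q)\simeq P'(q\ct\glue(s))$ is exactly $P(f(l),q)\simeq Q(g(l),q\ct\glue(l))$, i.e.\ $e(l,q)$.

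Applying \Cref{thm:mainresult-based} to $(P',r',e')$ then produces $\mathsf{ind}_{r',e'} : \Pi\{b:A\},(q:[a_0]=[b]).\,P'(q)$; restricting $b$ to the summand $M$ gives $\mathsf{ind}^P_{r,e}$ and restricting it to $N$ gives $\mathsf{ind}^Q_{r,e}$. Both $\beta$-rules are read off from those of \Cref{thm:mainresult-based}: the first is \eqref{eq:thm-based-first-beta} at the basepoint, and the second is \eqref{eq:thm-based-second-beta} specialised to $s\equiv(l,\refl,\refl)$, which unfolds to $\mathsf{ind}^Q_{r,e}(q\ct\glue(l)) = e(l,q,\mathsf{ind}^P_{r,e}(q))$.

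I do not expect a real obstacle here — the argument is essentially bookkeeping on top of \Cref{thm:mainresult-based}. The two places that deserve a moment's care are (i) fixing the presentation of $\pout L M N$ so that $\glue(l)$ is literally $\glue(l,\refl,\refl)$ and no stray transports appear when passing between the pushout and the coequalizer, and (ii) performing the path induction that collapses $s$ to its canonical form $(l,\refl,\refl)$, and disposing of the three empty cases of $\sim$, before attempting to recognise $e(l,q)$ in the definition of $e'$.
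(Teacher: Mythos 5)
Your proposal is correct and follows exactly the route the paper itself takes: Theorem~\ref{thm:main-pushout} is stated there without a separate proof precisely because it is obtained by presenting the pushout as the coequalizer $\specialquot{(M+N)}$ via \eqref{eq:pushout-from-coequ} and specializing \Cref{thm:mainresult-based}, with the case analysis on the (mostly empty) relation and the path induction collapsing $s$ to $(l,\refl,\refl)$ being the only bookkeeping. Your write-up just makes explicit the details the paper leaves to the reader (and its footnote about Lean).
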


\begin{remark}
 As before, the first $\beta$-rule \eqref{eq:beta1-mainresult-pushout} holds judgmentally in our formalization.
\end{remark}

\begin{theorem}[initiality of pushout equality] \label{thm:mainresult-pushout-based}
 Given the same data as in the previous theorem, we can consider the category $\PP$, whose definition mirrors that of $\CC$.
 Objects are quadruples $(J,K,r,e)$,
 
 \begin{minipage}[c]{.355\textwidth}
 \begin{align}
  & J : M \to \UU \\
  & K : N \to \UU 
 \end{align}
 \end{minipage}
 \begin{minipage}[c]{.6\textwidth}
 \begin{align}
  & r : K(n_0) \\
  & e : \Pi (l : L). J(f(l)) \simeq K(g(l))
 \end{align}
 \end{minipage}
%

\vspace*{.15cm}

\noindent
and a morphism between $(J,K,r,e)$ and $(J',K',r',e')$ consists of fiberwise functions which preserve $r$ and commute with $e$. 
 
 Then, the object defined by

 \begin{align}
  & J^i(m) \defeq \left(\inr(n_0) = \inl(m)\right) \\
  & K^i(n) \defeq \left(\inr(n_0) = \inr(n)\right) \\
  & r \defeq \refl_{\inr(n_0)} \\
  & e(l) \defeq \_ \ct \glue(l)
 \end{align}
%
%
%

\vspace*{.15cm}

\noindent
 is initial in $\PP$.\qed
\end{theorem}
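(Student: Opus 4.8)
The plan is to derive \Cref{thm:mainresult-pushout-based} as a specialization of the coequalizer version \Cref{thm:mainresult-cat-based}, using the encoding of pushouts as coequalizers recalled in the introduction. Concretely, put $A \defeq M+N$ and let $\sim$ be the relation from \eqref{eq:pushout-from-coequ}, so that $\specialquot{(M+N)}$ is equivalent to $\pout L M N$ compatibly with the constructors; in particular $[-]$ corresponds to $\inp$, and, writing $s_l \defeq (l, \refl_{f(l)}, \refl_{g(l)})$ for the canonical element of $\inl(f(l)) \sim \inr(g(l))$, the path $\glue(s_l)$ corresponds to $\glue(l)$. Take the base point $a_0$ to be $n_0$ regarded as an element of $M+N$, so that $[a_0]$ corresponds to $\inr(n_0)$. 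Then \Cref{thm:mainresult-cat-based} applies and tells us that the object $(K^i, p^i, e^i)$ of the associated category $\CC$ --- with $K^i(b) \defeq ([a_0]=[b])$, $p^i \defeq \refl_{[a_0]}$, and $e^i(s) \defeq \_ \ct \glue(s)$ --- is initial. What remains is to recognize $\CC$, for this particular $\sim$, as the category $\PP$, and to see where this initial object is sent.

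The identification of $\CC$ with $\PP$ is bookkeeping in the spirit of \Cref{lem:cats-are-iso}. A family $K : (M+N) \to \UU$ is equivalently a pair $(J,K)$ of families $J : M \to \UU$ and $K : N \to \UU$, and a point $r : K(a_0)$ is a point of the $N$-component at $n_0$. For the equivalence-component of an object of $\CC$, note that $b \sim c$ is empty unless $b \equiv \inl(m)$ and $c \equiv \inr(n)$; hence $e : \Pi\{b,c : A\}. (b \sim c) \to K(b) \simeq K(c)$ unfolds to the data of an equivalence $J(m) \simeq K(n)$ for every $l : L$, $m : M$, $n : N$, $p : f(l)=m$, and $q : g(l)=n$, and contracting the singletons $(m,p)$ and $(n,q)$ turns this precisely into $\Pi(l:L). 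J(f(l)) \simeq K(g(l))$, which is the $e$-component of $\PP$. The same two manipulations --- splitting a fiberwise map over $M+N$ into a pair, and contracting singletons inside the commuting-square datum --- identify the morphism types of $\CC$ and $\PP$ as well, and one checks directly (just as for $\Phi_1$ in \Cref{lem:cats-are-iso}) that identities and composition are preserved. So this is an isomorphism of wild categories in the strong sense of \Cref{lem:cats-are-iso}, and in particular it carries initial objects to initial objects.

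Finally one computes the image of $(K^i, p^i, e^i)$ in $\PP$. Its $M$-component is $J^i(m) = ([a_0]=[\inl(m)]) = (\inr(n_0)=\inl(m))$, its $N$-component is $K^i(n) = (\inr(n_0)=\inr(n))$, its point is $\refl_{\inr(n_0)}$, and its equivalence-component sends $l : L$ to $\_ \ct \glue(s_l)$, which under the equivalence $\specialquot{(M+N)} \simeq \pout L M N$ is $\_ \ct \glue(l)$. This is exactly the object displayed in the statement, so it is initial in $\PP$, as claimed. The one place I expect to need genuine care is the middle step: carrying out the $\Pi$-over-coproduct splitting and the singleton contractions carefully enough to be sure the induced equivalence of categories really respects identities and composition, so that initiality transfers cleanly --- everything else is routine, and indeed in the Lean formalization this theorem is essentially an instance of the coequalizer construction (cf.\ \Cref{sec:formalization}). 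An alternative would be to redo the $\CC$-versus-$\DD$ argument directly over the pushout rather than over a coequalizer, but the reduction above is shorter.
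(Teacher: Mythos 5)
Your proposal is correct and follows exactly the route the paper takes: the paper proves this theorem by expressing the pushout as the coequalizer of the relation \eqref{eq:pushout-from-coequ} on $M+N$ and specializing \Cref{thm:mainresult-cat-based} (cf.\ the remark at the start of \Cref{sec:pushouts-etc} and its footnote), which is precisely your reduction. You merely spell out the bookkeeping (splitting families over $M+N$, singleton contraction in the $e$-component, transfer of the initial object) that the paper declares straightforward.
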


\section{First Applications} \label{sec:applications}

We anticipate that our main result, especially in the formulations of \Cref{thm:mainresult-based} and \ref{thm:mainresult-pushout-based}, 
will be a useful tool for a variety of constructions in HoTT.
Our own motivation for developing these theorems is the concrete realization of the plans outlined by the first-named author~\cite{nicolai:hottest}.
In this paper, we present two shorter applications.

\subsection{The Loop Space of the Circle} \label{sec:application-S1}

Recall that the \emph{loop space} $\Omega(X)$ of a type $X$ with an (implicitly given) point $x_0 : X$ is defined to be $x_0 = x_0$.
Thus, the loop space of the circle $\Sone$~\eqref{eq:S1-def} is simply $\mathsf{base} = \mathsf{base}$.
Let us reprove the following known result:

\begin{theorem}[Licata-Shulman \cite{licataShulman_circle}]
 $\Omega(\Sone) \simeq \Z$.
\end{theorem}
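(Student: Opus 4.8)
The plan is to apply \Cref{thm:mainresult-based} with the circle realized as the coequalizer $\specialquot{\unit}$ where $\sim$ is constantly $\unit$, so that $[\star] = \mathsf{base}$ and $\glue(\star) = \mathsf{loop}$. Taking $a_0 \defeq \star$, the type family from \eqref{eq:mainresult-based-P} becomes simply $P : (\mathsf{base} = \mathsf{base}) \to \UU$. Inspired by the guess that $\Omega(\Sone) \simeq \Z$, I would \emph{not} define $P$ to be a property but rather use the non-dependent version: the object of $\CC$ we want is $(K, r, e)$ with $K \defeq \Z$ (constant family over the one-point index $\unit$), $r \defeq 0 : \Z$, and $e(\star) \defeq \mathsf{succ} : \Z \simeq \Z$, the successor equivalence. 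Since $\Sone$ arises from a single-point index type with the trivial relation, the categorical statement \Cref{thm:mainresult-cat-based} says that $(\lambda b.\, \mathsf{base} = \mathsf{base},\ \refl,\ \_ \ct \mathsf{loop})$ is initial in $\CC$, hence there is a unique morphism from it to $(\Z, 0, \mathsf{succ})$.

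The key steps, in order: first, spell out that for the unit index the category $\CC$ has objects $(K : \UU,\ r : K,\ e : K \simeq K)$ — a type with a point and a self-equivalence — and morphisms are point-and-equivalence preserving maps; this is exactly the category whose initial object is the ``integers'' in the sense of Licata--Shulman's presentation. Second, instantiate \Cref{thm:mainresult-cat-based} to get that $\bigl(\Omega(\Sone),\, \refl_{\mathsf{base}},\, \_ \ct \mathsf{loop}\bigr)$ is initial in this $\CC$. Third, exhibit $(\Z, 0, \mathsf{succ})$ as \emph{also} initial in $\CC$: for any $(K,r,e)$, a morphism is a function $\Z \to K$ sending $0 \mapsto r$ and commuting with $\mathsf{succ}$ on one side and $e$ on the other, and such a function exists uniquely by induction on $\Z$ (positive and negative cases), with uniqueness again by induction — this is the standard ``$\Z$ is the initial pointed type with an automorphism'' fact. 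Fourth, two initial objects of the same category are equivalent (indeed uniquely so), giving $\Omega(\Sone) \simeq \Z$ directly.

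The main obstacle — and honestly it is mild — is verifying that $(\Z, 0, \mathsf{succ})$ is initial in $\CC$, i.e. the induction argument constructing and uniquely characterizing maps out of $\Z$ respecting the automorphism. This is elementary and is essentially already in \cite{licataShulman_circle} (it is what their $\mathsf{encode}/\mathsf{decode}$ pair amounts to after the fact), but it must be stated cleanly as an initiality claim. Everything else is bookkeeping: confirming the $\unit$-indexed coequalizer really is $\Sone$ with $\glue(\star)$ playing the role of $\mathsf{loop}$, and noting that $\beta$-rule \eqref{eq:thm-based-second-beta} specializes to the expected recursion $\mathsf{decode}(q \ct \mathsf{loop}) = \mathsf{succ}(\mathsf{decode}(q))$. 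Thus the entire proof reduces to the one-line observation that $\Omega(\Sone)$ and $\Z$ are both initial objects of the category of pointed types equipped with a self-equivalence, which is precisely the content of \Cref{thm:mainresult-cat-based} in the trivial case together with a standard fact about $\Z$.
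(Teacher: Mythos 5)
Your proposal is correct and follows essentially the same route as the paper: instantiate \Cref{thm:mainresult-cat-based} at the unit-indexed coequalizer so that $\bigl(\Omega(\Sone), \refl, \_ \ct \mathsf{loop}\bigr)$ is initial in the category of pointed types with a self-equivalence, check that $(\Z, 0, \mathsf{suc})$ is initial there by a standard induction on the integers, and conclude by uniqueness of initial objects. The paper likewise isolates the initiality of $(\Z,0,\mathsf{suc})$ as the only remaining (purely integer-theoretic) work, so there is nothing substantive to add.
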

\begin{proof}
As discussed in the introduction, $\Sone$ is the coequalizer of $\unit$ and the relation which has $\unit$ as its value.
This allows us to apply \Cref{thm:mainresult-cat-based} and, since all quantifications are now quantifications over the unit type, we can safely ignore them.
Thus, $\big(\Omega(\Sone), \refl, \_ \ct \mathsf{loop}\big)$ is the initial object in the category
of pointed types with an automorphism.
Due to the uniqueness of initial objects,
all we need is that $(\Z, 0, \mathsf{suc})$
is initial in this category.
This statement is completely removed from the higher inductive type $\Sone$;
it is a basic property of the integers, analogous to the fact that $(\N,0,\mathsf{suc})$ is initial in the category of pointed types with an endofunction.
\end{proof}

Of course, the difficulty of a concrete proof for the initiality property depends on the concrete definition of $\Z$ that one uses.
With the definition used by Licata and Shulman (essentially $\N + \unit + \N$), this is easy albeit some work.
We will come back to definitions of the integers in \Cref{remark:generalized-squid}.

\subsection{Pushouts Preserve Embeddings}\label{sec:application-pushout-emb}

Recall that an \emph{embedding} is a map $h : X \to Y$ whose fibers are propositions, i.e.\ where, for each $y: Y$, the type
$h^{-1}(y) \defeq \Sigma(x:X). y = h(x)$ is a (``mere'') proposition.
Equivalently, $h$ is an embedding if and only if 
\begin{equation} \label{eq:ap}
 \ap_h : \Pi\{x,x': X\}. (x = x') \to (h(x) = h(x'))
\end{equation}
is a family of equivalences between path spaces.
As formalized by Finster~\cite{eric:embedding-pushout} via an encode-decode construction, embeddings are closed under pushout.
As a further application of our main result, we present an alternative (and significantly shorter) argument.

\noindent
\begin{minipage}[t]{.7\textwidth}
\vspace*{-.75cm}
\begin{theorem}[Finster~\cite{eric:embedding-pushout}] \label{thm:pushout-embedding}
 Embeddings are closed under pushout.
 Explicitly, if $f$ in the diagram on the right is an embedding, then so is $\inr$.
\end{theorem}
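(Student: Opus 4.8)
The plan is to use \Cref{thm:main-pushout} to show that, for every $n_0 : N$, the map $\ap_{\inr}$ is a family of equivalences $(n_0 = n) \simeq (\inr(n_0) = \inr(n))$; since $n_0$ is arbitrary, this is exactly the statement that $\inr$ is an embedding (by the characterization of embeddings recalled around \eqref{eq:ap}). First I would reduce this to a contractibility statement: by the standard fact that a fibrewise map is a fibrewise equivalence as soon as the induced map on total spaces is an equivalence \cite{hott-book}, and since $\Sigma(n : N). (n_0 = n)$ is contractible, it suffices to prove that $\Sigma(n : N). (\inr(n_0) = \inr(n))$ is contractible.

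To obtain this from \Cref{thm:main-pushout} I would fix $n_0 : N$ and feed the theorem the following data, taking the ``candidate path spaces'' to be the homotopy fibres of the obvious decoding maps. For $q : \inr(n_0) = \inr(n)$ set
\[ Q(q) \defeq \Sigma(p : n_0 = n).\; \ap_{\inr}(p) = q, \]
and for $q : \inr(n_0) = \inl(m)$ set
\[ P(q) \defeq \Sigma\big((l,\alpha,p) : \Sigma(l : L). (f(l) = m) \times (n_0 = g(l))\big).\; \ap_{\inr}(p) \ct \glue(l)^{-1} \ct \ap_{\inl}(\alpha) = q. \]
The base point is $r \defeq (\refl_{n_0}, \refl) : Q(\refl_{\inr(n_0)})$. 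The interesting component is
\[ e : \Pi(l : L),(q : \inr(n_0) = \inl(f(l))).\; P(q) \simeq Q(q \ct \glue(l)), \]
and this is the only place the hypothesis on $f$ enters: inside $P(q)$ the pair $\big((l',\alpha') : \Sigma(l' : L). f(l') = f(l)\big)$ ranges over a type that is contractible with centre $(l,\refl)$ precisely because $f$ is an embedding, so contracting it leaves $\Sigma(p : n_0 = g(l)).\; \ap_{\inr}(p) \ct \glue(l)^{-1} = q$, which is $Q(q \ct \glue(l))$ after moving $\glue(l)^{-1}$ to the other side of the equation.

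\Cref{thm:main-pushout} then produces a term $\mathsf{ind}^Q_{r,e} : \Pi\{n : N\},(q : \inr(n_0) = \inr(n)). Q(q)$, i.e.\ a section of the map $(n_0 = n) \to (\inr(n_0) = \inr(n))$ given by $\ap_{\inr}$, uniformly in $n$; uncurried, this is a section of the total map $\Sigma(n : N).(n_0 = n) \to \Sigma(n : N).(\inr(n_0) = \inr(n))$. Hence the latter type is a retract of the contractible type $\Sigma(n : N).(n_0 = n)$, so it is contractible, which is what we needed. I expect the main obstacle to be the construction of $e$ — lining up the path algebra around $\glue$ with the singleton contraction, and handling the dependence of $e$ on $q$ — but this is short; the companion family $P$ is never used for anything except typing $e$, and in fact the $\beta$-rules of \Cref{thm:main-pushout} are not needed here, only the existence of $\mathsf{ind}^Q_{r,e}$.
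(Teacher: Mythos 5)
Your proof is correct, and the setup ($Q$ as the fibre of $\ap_\inr$, $P$ as a matching fibre shifted across $\glue$, $r = (\refl,\refl)$, and $e$ obtained by contracting $f^{-1}(f(l))$) is essentially the same as the paper's; your $P$ is a trivial rearrangement of the paper's $P(m,q) \defeq \Sigma((l_0,q_0) : f^{-1}(m)).\, Q(g(l_0), q \ct \ap_{\inl}(q_0) \ct \glue(l_0))$. Where you genuinely diverge is the endgame. The paper extracts a \emph{right} inverse $s$ of $\ap_\inr$ from $\mathsf{ind}^Q_{r,e}$ (as you do), and then shows it is also a \emph{left} inverse by path induction, which forces a use of the first $\beta$-rule $\mathsf{ind}^Q_{r,e}(\refl) = r$. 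You instead pass to total spaces: the fibrewise section makes $\Sigma(n:N).(\inr(n_0) = \inr(n))$ a retract of the contractible type $\Sigma(n:N).(n_0=n)$, hence contractible, and any map between contractible types is an equivalence, so $\ap_\inr$ is a fibrewise equivalence by the standard total-space criterion. This buys you a proof that uses only the \emph{existence} of $\mathsf{ind}^Q_{r,e}$ and is insensitive to the computation rules --- a mild but real simplification over the paper's argument. The trade-off is that you lean on the fibrewise-equivalence characterization of total-space equivalences, whereas the paper stays entirely at the level of the fibre and gets a slightly more explicit two-sided inverse. Both are short; yours is marginally more robust to the precise strength of \Cref{thm:main-pushout}.
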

\end{minipage}
\begin{minipage}{.3\textwidth}
\end{minipage}
\begin{minipage}[t]{.3\textwidth}
  \begin{tikzpicture}[x=2cm,y=-1.cm,baseline=(current bounding box.center)]
   \node (A) at (0,0) {$L$};
   \node (C) at (1,0) {$N$};
   \node (B) at (0,1) {$M$};
   \node (D) at (1,1) {$\pout L M N$};
  
   \draw[right hook->] (A) to node [left] {$\scriptstyle f$} (B);
   \draw[->] (A) to node [above] {$\scriptstyle g$} (C);
   \draw[->, dashed] (B) to node [below] {$\scriptstyle \inl$} (D);
   \draw[right hook->, dashed] (C) to node [right] {$\scriptstyle \inr$} (D);
  \end{tikzpicture}
\end{minipage}

\begin{proof}
 Using \eqref{eq:ap}, we need to show that $\ap_\inr : (n_0 = n) \to (\inr(n_0) = \inr(n))$ is an equivalence for all points $n_0, n$.
 Thus, for any $q : \inr(n_0) = \inr(n)$, we want to find something in the fiber over $q$.
 This tells us how we need to choose the type family $Q$ \eqref{eq:Q-mainresult-pushout} of \Cref{thm:main-pushout}:
 we fix $n_0$ and define
 \begin{align}
  & Q : \Pi(n : N). (\inr(n_0) = \inr(n)) \to \UU \\
  & 
  Q(n,q) \defeq \ap_\inr^{-1}(q).
 \end{align}
 We also need to define the type family $P$ \eqref{eq:P-mainresult-pushout}.
 Given something in $M$, we ``move'' it back to $N$ by going via the fiber, which allows us to define $P$ using $Q$:
 \begin{align}
  & P : \Pi(m : M). (\inr(n_0) = \inl(m)) \to \UU \\
  & 
  P(m,q) \defeq \Sigma ((l_0, q_0) : f^{-1}(m)). \nonumber \\
  & \phantom{P(m,q) \defeq \Sigma(} Q\big(g(l_0), q \ct \ap_{\inl}(q_0) \ct \glue(l_0)\big).
 \end{align}
 The component $r$ \eqref{eq:r-mainresult-pushout} is the obvious one, $r \defeq (\refl, \refl)$.
 For a given $l:L$ we know that, since $f$ is an embedding, the type $f^{-1}(f(l))$ is contractible and we can assume $(l_0, q_0) \equiv (l, \refl)$.
 This implies $P(f(l),q) \simeq Q(g(l),q \ct \glue(l))$, which is exactly what we need in order to define the component $e$ \eqref{eq:e-mainresult-pushout}.
 Thus, we have 
 \begin{equation}
  \mathsf{ind}^Q_{r,e} : \Pi\{n : N\}, (q : \inr(n_0) = \inr(n)). \ap_\inr^{-1}(q),
 \end{equation}
 i.e.\ a section $s$ of $\ap_\inr$ (a function such that $\ap_\inr \circ s = \mathsf{id}$).
 To show that $s \circ \ap_\inr : (n_0 = n) \to (n_0 = n)$ is the identity, we do path induction and use the first $\beta$-rule \eqref{eq:beta1-mainresult-pushout}.
\end{proof}

\section{Free Groupoids and a Higher Seifert-van Kampen Theorem} \label{sec:explicit}

The traditional Seifert-van Kampen (SvK) theorem, a standard result in algebraic topology, makes it possible to calculate the fundamental group of a topological space $X$ when the fundamental groups of two open and path-connected subspaces covering $X$ are already known.
Favonia and Shulman~\cite{favonia:SvK} have stated and shown this theorem in HoTT, where the union of subspaces can be phrased as a (homotopy) pushout.
Their result is that fundamental groups of a pushout are equivalent to a type $\code$ which they define as a set-quotient of a list.

Fundamental groups (in topology) are quotients of spaces or (in HoTT) are $0$-truncations of equality types.
Thus, it is natural to ask for a \emph{higher dimensional} version of the theorem which does not quotient or truncate.
In homotopy theory, different versions have been proved by Lurie \cite{lurie18derived} and Brown, Higgins, and Sivera \cite{brown2011nonabelian}.
In HoTT, it is an open problem how this could be done.
Our results of the current paper suggest one possible such higher SvK theorem, which (after recalling the Favonia-Shulman result) we present in this section.

Note that the precise formulation of a theorem is part of the open question how to generalize the SvK theorem in HoTT,
since the analogue of the $\code$ family by Favonia and Shulman has to be defined (and a trivial solution exists: define this analogue to be the equality).
Our justification for why the analogue we suggest is reasonable is that, by $0$-truncating, the Favonia-Shulman theorem can be recovered relatively easily.

\noindent
\begin{minipage}[c]{.7\textwidth}
As before in \Cref{sec:pushouts-etc}, let us assume that the types $L, M, N$ and functions $f, g$ in the pushout on the right are given for the rest of the section.
As in \cite{favonia:SvK}, we write $P \defeq \pout L M N$.
\end{minipage}
\begin{minipage}[c]{.25\textwidth}
   \begin{tikzpicture}[x=1.4cm,y=-1.cm,baseline=(current bounding box.center)]
   \node (A) at (0,0) {$L$};
   \node (C) at (1,0) {$N$};
   \node (B) at (0,1) {$M$};
   \node (D) at (1,1) {$P$}; 
  
   \draw[->] (A) to node [left] {$\scriptstyle f$} (B);
   \draw[->] (A) to node [above] {$\scriptstyle g$} (C);
   \draw[->, dashed] (B) to node [below] {$\scriptstyle \inl$} (D);
   \draw[->, dashed] (C) to node [right] {$\scriptstyle \inr$} (D);
  \end{tikzpicture}
\end{minipage}

A caveat is in order.
In this section, we make use of \emph{indexed} higher inductive types, and this is not part of our formalization.
Note that indexed inductive types can always be encoded via inductive types~\cite{DBLP:journals/jfp/AltenkirchGHMM15,Sattler:indexedW}, 
and we expect that the same is true for indexed higher inductive types. 

\subsection{The Favonia-Shulman SvK Theorem} \label{sec:favonia-simple-SvK}

Favonia and Shulman give two versions of the SvK theorem.
We concentrate on the first (``naive Seifert-van Kampen Theorem''); we think the difference between the two versions is not really relevant for what we present in the current paper.
We do not repeat their definition of $\code$ in full detail, since this definition is of significant length (2 pages including careful explanations and remarks).
In a nutshell, $\code(u,v)$ is a set-quotient of a type of lists which ``link'' $u$ and $v$, where $u,v : P$.
For simplicity, we restrict ourselves to endpoints in $M+N$ (instead of $P$).
Let us fix $n_0, n : N$.
Then, the considered lists are points $k_1, l_1, k_2, l_2, \ldots : L$
together with $p_i : \trunc 0 {g(l_i) = g(k_{i+1})}$ and $q_i : \trunc 0 {f(k_i) = f(l_i)}$ such that the $p_i$ and $q_i$ form a path from $n_0$ to $n$ as in the following drawing, where the vertical arrows are $\glue$'s:
%
 \begin{equation} \label{eq:linked-paths-drawing}
  \begin{tikzpicture}[x=1.4cm,y=-1cm,baseline=(current bounding box.center)]
   \node (A) at (0,0) {$n_0$};
   \node (B) at (1,0) {$g(k_1)$};
   \node (C) at (1,1) {$f(k_1)$};
   \node (D) at (2,1) {$f(l_1)$};
   \node (E) at (2,0) {$g(l_1)$};
   \node (F) at (3,0) {$g(k_2)$};
   \node (G) at (3,1) {$f(k_2)$};
   \node (H) at (4,1) {$f(l_2)$};
   \node (I) at (4,0) {$g(l_2)$};
   \node (J) at (5,0) {$n$};
  
   \draw[->] (A) to node [above] {$p_0$} (B);
   \draw[->] (B) to node {} (C);
   \draw[->] (C) to node [above] {$q_1$} (D);
   \draw[->] (D) to node {} (E);
   \draw[->] (E) to node [above] {$p_1$} (F);
   \draw[->] (F) to node {} (G);
   \draw[->] (G) to node [above] {$q_2$} (H);
   \draw[->] (H) to node {} (I);
   \draw[->] (I) to node [above] {$p_2$} (J);
  \end{tikzpicture}
 \end{equation}
  
  \noindent
  Next, a set-quotient is taken which ensures that we can remove ``trivial'' paths in the above picture.
  For example, if $l_1 \equiv k_2$ and $p_1 \equiv \refl_{f(l_1)}$, then the set-quotient ensures that the above list is identified with the following:
 \begin{equation} \label{eq:linked-paths-drawing-simplified}
  \begin{tikzpicture}[x=1.4cm,y=-1cm,baseline=(current bounding box.center)]
   \node (A) at (0,0) {$n_0$};
   \node (B) at (1,0) {$g(k_1)$};
   \node (C) at (1,1) {$f(k_1)$};
   \node (H) at (4,1) {$f(l_2)$};
   \node (I) at (4,0) {$g(l_2)$};
   \node (J) at (5,0) {$n$};
  
   \draw[->] (A) to node [above] {$p_0$} (B);
   \draw[->] (B) to node {} (C);
   \draw[->] (C) to node [above] {$q_1 \ct q_2$} (H);
   \draw[->] (H) to node {} (I);
   \draw[->] (I) to node [above] {$p_2$} (J);
  \end{tikzpicture}
 \end{equation}

  \noindent
  This set-quotient defines the type $\code(\inr(n_0), \inr(n))$, and the definition where one or both endpoints are in $M$ is analogous.
  Restricted to the case where we consider endpoints in $M+N$, the SvK theorem states:
  \begin{theorem}[Favonia and Shulman~\cite{favonia:SvK}] \label{thm:favonia:SvK}
   For $x,y : M+N$, there is an equivalence
   $\trunc 0 {\inp(x) =_P \inp(y)} \simeq \code(x,y)$. \qed
  \end{theorem}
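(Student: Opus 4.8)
The plan is to deduce this from the characterisation of pushout equality types in \Cref{thm:mainresult-pushout-based} by passing to the set-truncated level. Fix $n_0 : N$. I will construct, for every $y : M + N$, an equivalence $\trunc 0 {\inr(n_0) =_P \inp(y)} \simeq \code(\inr(n_0), y)$; the analogous statement with a base point in $M$ is proved by the same argument with the roles of $M$ and $N$ exchanged (the pushout, and hence the $\code$-construction, being symmetric in the two sides), and since every element of $M + N$ lies in one of the two summands, these two families of equivalences cover all pairs of endpoints, which is the content of the theorem.

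First I would introduce the sub-wild-category $\PP_{\Set} \subseteq \PP$ spanned by the objects $(J, K, r, e)$ all of whose fibres $J(m)$ and $K(n)$ are sets, and check that the set-truncation of the initial object of \Cref{thm:mainresult-pushout-based} is initial in $\PP_{\Set}$. This truncated object has fibres $\trunc 0 {\inr(n_0) = \inl(m)}$ and $\trunc 0 {\inr(n_0) = \inr(n)}$, point $\tproj{0}{\refl_{\inr(n_0)}}$, and $e(l)$ given by composition with $\tproj{0}{\glue(l)}$ --- exactly the set-truncated equality data in the theorem. Its initiality in $\PP_{\Set}$ follows from \Cref{thm:mainresult-pushout-based} via the universal property of $\trunc 0 {-}$: for a set $S$ one has $(\trunc 0 X \to S) \simeq (X \to S)$ naturally in $X$, and this identification is compatible with the point- and $e$-coherence components of a morphism in $\PP$; hence the type of $\PP_{\Set}$-morphisms out of the truncated object is equivalent to the type of $\PP$-morphisms out of the un-truncated initial object, which is contractible.

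Next I would exhibit $\code(\inr(n_0), -)$ as an object $\mathsf{C}$ of $\PP_{\Set}$: its fibres are set-quotients, hence sets; the distinguished point of $\code(\inr(n_0), \inr(n_0))$ is the class of the trivial link; and the equivalence $\code(\inr(n_0), \inl(f(l))) \simeq \code(\inr(n_0), \inr(g(l)))$ is ``append the $\glue(l)$-crossing to a link'', whose inverse ``append the $\glue(l)^{-1}$-crossing'' is inverse to it because the defining relations of the set-quotient collapse an adjacent $\glue(l)/\glue(l)^{-1}$ pair. By initiality (from the previous step) there is a unique $\PP_{\Set}$-morphism $\Theta$ from the truncated equality object to $\mathsf{C}$, and the two equations saying that $\Theta$ is a morphism pin its fibrewise components $\Theta_y : \trunc 0 {\inr(n_0) = \inp(y)} \to \code(\inr(n_0), y)$ down to: $\Theta_y$ of the class of $\refl$ is the trivial link, and $\Theta_y$ of the class of $q \ct \glue(l)$ is the $\glue(l)$-extension of $\Theta(q)$. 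Conversely I would define a ``realisation'' map $\Psi_y : \code(\inr(n_0), y) \to \trunc 0 {\inr(n_0) = \inp(y)}$ by sending a link to the truncated composite of its segments in $P$, pushing an $N$-segment $p_i$, resp.\ an $M$-segment $q_i$, forward along the functorial action of $\trunc 0 {\ap_{\inr}(-)}$, resp.\ $\trunc 0 {\ap_{\inl}(-)}$, and inserting $\tproj{0}{\glue(l_i)}$ (or its inverse) at each crossing; this descends to the set-quotient since every defining relation of $\code$ holds for the composition of truncated paths in $P$, and it is visibly a $\PP_{\Set}$-morphism (sending the trivial link to the class of $\refl$ and intertwining ``append $\glue(l)$'' with $\_ \ct \glue(l)$).

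It remains to see that $\Theta$ and $\Psi$ are mutually inverse. Since both are $\PP_{\Set}$-morphisms, $\Psi \circ \Theta$ is an endomorphism of the initial object of $\PP_{\Set}$ and so equals the identity by initiality. For $\Theta \circ \Psi = \id$ one proceeds by induction on links: the trivial link is handled by the first characterising equation of $\Theta$, and the inductive step for each kind of segment reduces --- using functoriality of $\ap_{\inl}$, $\ap_{\inr}$ and $\trunc 0 {-}$, and, after path induction on the (proposition-valued) goal --- either to the second characterising equation of $\Theta$ or to a defining relation of $\code$. This makes each $\Theta_y$ an equivalence, proving the theorem. The main obstacle is precisely this last part, together with the verification that $\mathsf{C}$ is an object of $\PP_{\Set}$ and that $\Psi$ respects the relations: these are the only places where one must unfold the explicit (two-page) definition of $\code$ from~\cite{favonia:SvK}, whereas the conceptual skeleton --- the equality types of a pushout form the initial object, hence their $0$-truncation is the initial set-valued object, which is $\code$ --- is carried entirely by \Cref{thm:mainresult-pushout-based}. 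Some additional care is needed with the coherence bookkeeping in the second step, namely that the truncation adjunction genuinely respects the point- and $e$-components of morphisms in the wild category $\PP$.
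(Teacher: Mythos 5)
Your argument is essentially correct, but it is worth being clear that the paper itself does not prove this statement: it is quoted from Favonia and Shulman~\cite{favonia:SvK} (hence the \qed), and the paper's only related claim is the one-sentence remark after \Cref{thm:higher-SvK} that the truncated theorem can be recovered by $0$-truncating the equivalence \eqref{eq:thm-higher-svk} and then comparing $\trunc 0 {\mathfrak C(x,y)}$ with $\code(x,y)$ by maps in both directions. Your route is close in spirit to that remark but genuinely different in one respect: you bypass the indexed higher inductive type $\mathfrak C$ altogether and instead $0$-truncate the initial object of $\PP$ itself, observing that the truncated object is initial among the set-valued objects of $\PP$ (correct, since for set-valued targets the point- and square-components of a morphism are propositions, so the universal property of $\trunc 0 {-}$ lifts to an equivalence of hom-types), and then compare with $\code$ inside that subcategory. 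This buys self-containedness: your derivation uses only \Cref{thm:mainresult-pushout-based} and ordinary set-quotients, whereas the paper's suggested recovery factors through \Cref{thm:higher-SvK}, whose proof rests on the initiality of the indexed HIT, flagged in \Cref{remark:generalized-squid} as resting on not-yet-settled foundations and not formalized. What it does not buy is any shortcut through the combinatorics: exhibiting $\code(\inr(n_0),-)$ as an object of your subcategory, checking that the realisation map $\Psi$ descends to the set-quotient, and the list induction for $\Theta \circ \Psi = \id$ amount to redoing the ``decode'' half of Favonia and Shulman's own argument, which you rightly identify as where the work lies. One small imprecision: in that last induction, the use of path induction on each segment is licensed by generalizing its free endpoint (the families are defined over all of $M+N$), not by the goal being proposition-valued --- the latter is what licenses the set-quotient induction; and the reduction of a base point in $M$ to one in $N$ does require passing through the symmetry equivalence of the pushout, since $\PP$ as defined points the $N$-side.
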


\subsection{From Quotiented Lists to Free Higher Groupoids}

The central difficulty of a higher version of the SvK theorem is, of course, avoiding the set-truncation.
Note that, in the above description of the lists, the set-truncations in $p_i : \trunc 0 {g(l_i) = g(k_{i+1})}$ and $q_i : \trunc 0 {p(k_i) = p(l_i)}$ can be removed since we set-truncate later when taking the set-quotient.
This is essentially a repeated application of the equivalence
\begin{equation}
 \trunc{n}{\Sigma (a:A). \trunc{n}{B(a)}} \simeq \trunc{n}{\Sigma (a:A). B(a)}.
\end{equation}
This unnecessary set-truncation \emph{does} make sense in the formulation of the SvK theorem, where all equality types are set-truncated, but removing it makes it easier to motivate our \emph{higher} SvK theorem.

Next, we suggest an alternative definition for the type of lists (before quotienting/truncation).
To simplify things further, let us fix $n_0 : N$ and consider lists starting at this point.
Let us now look at the following \emph{indexed} inductive type $C_0 : (M+N) \to \UU$ with three constructors,
where $C_0(x)$ should be understood as a type of lists from $n_0$ to $x$.
Recall that we keep the embeddings $i_1 : M \to (M+N)$ and $i_2 : N \to (M+N)$ implicit.
\begin{equation}
 \begin{alignedat}{1}
  & \mathsf{data} \; C_0 : (M+N) \to \UU \\
   & \quad \nil : C_0(n_0) \\
   & \quad \glconstr : \Pi (l : L). C_0(f(l)) \to C_0(g(l)) \\
   & \quad \glconstr' : \Pi (l : L). C_0(g(l)) \to C_0(f(l))
 \end{alignedat}
\end{equation}
Clearly, $\nil$ gives us the empty list.
The other two constructors allow us to switch between lists ending in a point in $M$ to lists ending in a point in $N$ and vice versa.
Intuitively, this is done simply by adding a $\glue$ at the end of the list.
This explains how to add the \emph{vertical} lines of a list as drawn in \eqref{eq:linked-paths-drawing}.
It may be surprising that we do not add the \emph{horizontal} components $p_i$ and $q_i$ explicitly.
The reason is that they are automatically and implicitly present in this encoding:
the map $\transport^{C_0}$ of type
\begin{equation}
 \Pi\{l,l' : L\}. (g(l) = g(l')) \to \big(C_0(g(l)) \to C_0(g(l'))\big)
\end{equation}
allows us to ``insert'' the upper horizontal components in \eqref{eq:linked-paths-drawing}
and (exchanging $g$ by $f$) also the lower horizontal components.

The type $C_0(x)$ encodes lists from $n_0$ to $x$, but we have not done the quotienting, i.e.\ the lists \eqref{eq:linked-paths-drawing} and \eqref{eq:linked-paths-drawing-simplified} are still different.
To remedy this, we can turn $C_0$ into an indexed \emph{higher} inductive type and add constructors ensuring that $\glconstr(l, \glconstr'(l, x)) = x$ and $\glconstr'(l,\glconstr(l,x)) = x$.
If we set-truncate, this would give us the correct type, namely something equivalent to the $\code(n_0,x)$ by Favonia and Shulman. 
Since we do not want to set-truncate, we have to be more careful.
$\glconstr(l)$ and $\glconstr'(l)$ together with the equality constructors will form a pair of \emph{quasi-inverses} (cf.\ \cite{hott-book}), and it is known that this type is not well-behaved.
Instead, we mirror the components that form an actual equivalence.
Although there are several formulations that would work, we use those that turn $\glconstr$ into a \emph{bi-invertible map}~\cite{hott-book}, as follows:
\begin{equation}\label{eq:pushout-free-higher-groupoid}
 \begin{alignedat}{1}
  & \mathsf{data} \; C : (M+N) \to \UU \\
   & \quad \nil : C(n_0) \\
   & \quad \glconstr : \Pi (l : L). C(f(l)) \to C(g(l)) \\
   & \quad \mathsf{linv} : \Pi (l : L). C(g(l)) \to C(f(l)) \\
   & \quad \mathsf{leq} : \Pi (l:L), (x : C(f(l))). \mathsf{linv}(l, \glconstr(l, x)) = x \\ 
   & \quad \mathsf{rinv} : \Pi (l : L). C(g(l)) \to C(f(l)) \\
   & \quad \mathsf{leq} : \Pi (l:L), (y : C(g(l))). \glconstr(l,\mathsf{rinv}(l,y)) = y
 \end{alignedat}
\end{equation}
This definition of $C$ does certainly not look very appealing, and we only give this presentation because it is the ``standard'' way of presenting higher inductive types.
If we allow ourselves to fold the last five constructors into a single one, the type looks as follows:
\begin{equation}\label{eq:pushout-free-higher-groupoid-nice}
 \begin{alignedat}{1}
  & \mathsf{data} \; C : (M+N) \to \UU \\
   & \quad \nil : C(n_0) \\
   & \quad \glconstr : \Pi (l : L). C(f(l)) \simeq C(g(l))
 \end{alignedat}
\end{equation}

\noindent
It may also be interesting to do this in the formulation for a coequalizer instead of a pushout.
As explained in \Cref{sec:Quot-Coequ}, this is a completely mechanical translation.
Thus, assume $A$ with $a_0 : A$ and $\sim$.
Then, the corresponding type $G$ in the ``folded'' form looks as follows:
\begin{equation} \label{eq:free-higher-groupoid-nice}
 \begin{alignedat}{1}
  & \mathsf{data} \; G : A \to \UU \\
   & \quad \nil : G(a_0) \\
   & \quad \cons : \Pi \{b,c:A\}. (b \sim c) \to G(b) \simeq G(c)
 \end{alignedat}
\end{equation}
Let us write $\mathfrak G(a_0, \_)$ instead of $G(\_)$, in order to explicitly mention the point $a_0$.
We can call $\mathfrak G$ the \emph{free higher groupoid} generated by $\sim$.
This construction generalizes the explicit construction of a free higher group (based on an idea by Capriotti, cf.\ \cite{kraus_FHG}).
It also generalizes the ``integer type as a higher inductive type'' (itself a special case of the free higher group) which was independently suggested by Pinyo and Altenkirch~\cite{gun:squid} (based on Capriotti's idea), by van der Weide et al.\ in unpublished work, and in a formalization by Cavallo based on a remark by M\"ortberg~\cite{Evan:Squid}.
This example is discussed further in \Cref{remark:generalized-squid} below.

\subsection{A Higher SvK Theorem}

The type family $C$ depends on the chosen point $n_0$.
To remove this dependency, let us consider a version of $C$ which is indexed twice over $(M+N)$:
we write $\mathfrak C(n_0, y)$ for $C(y)$.
This expression plays the role of $\code$ in our higher analogue of
the Favonia-Shulman result, \Cref{thm:favonia:SvK}.
While it can be extended to a family $P \to P \to \UU$ in a straightforward way,
we choose the following formulation for simplicity (and to match \Cref{thm:favonia:SvK} more closely):

\begin{theorem}[a higher Seifert-van Kampen theorem] \label{thm:higher-SvK}
 For $x,y : M+N$, we have an equivalence:
 \begin{equation} \label{eq:thm-higher-svk}
  \left(\inp(x) =_P \inp(y)\right) \simeq \mathfrak C (x,y).
 \end{equation}
\end{theorem}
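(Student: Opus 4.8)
The plan is to recognize $\mathfrak C$ as the underlying family of an \emph{initial object} of the category $\PP$ from Theorem~\ref{thm:mainresult-pushout-based}, and then to invoke uniqueness of initial objects. Since every $x : M+N$ is of the form $\inl(m_0)$ or $\inr(n_0)$, it suffices --- up to the evident symmetry that swaps the roles of $M$ and $N$, $f$ and $g$ --- to treat the case $x \equiv \inr(n_0)$.

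So I would fix $n_0 : N$ and work with the indexed higher inductive type $C$ of~\eqref{eq:pushout-free-higher-groupoid-nice}, for which $\mathfrak C(\inr(n_0), y) \equiv C(y)$ by definition. Let $J^C : M \to \UU$ and $K^C : N \to \UU$ be the restrictions of $C$ along $i_1$ and $i_2$, set $r^C \defeq \nil : K^C(n_0)$, and set $e^C(l) \defeq \glconstr(l) : J^C(f(l)) \simeq K^C(g(l))$; then $(J^C, K^C, r^C, e^C)$ is an object of $\PP$. I would then show it is initial: the recursion principle of $C$ says exactly that, for every object $(J,K,r,e)$ of $\PP$, there is a fibrewise map $C \to (J,K,r,e)$ sending $\nil$ to $r$ and commuting with $\glconstr$ and $e$, i.e.\ a $\PP$-morphism; and the induction principle of $C$, applied to the family expressing that two such morphisms agree, gives their uniqueness, hence contractibility of $\PP\big((J^C,K^C,r^C,e^C),\,-\big)$. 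The bi-invertible-map presentation~\eqref{eq:pushout-free-higher-groupoid} is what makes this clean: a fibrewise map commuting with the \emph{underlying functions} of $\glconstr(l)$ and $e(l)$ automatically and uniquely commutes with the coherence data of these equivalences, because a ``natural transformation'' between equivalences is determined on the inverses.

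Next, Theorem~\ref{thm:mainresult-pushout-based} provides a second initial object of the same $\PP$, namely the one with $J^i(m) \defeq (\inr(n_0) = \inl(m))$, $K^i(n) \defeq (\inr(n_0) = \inr(n))$, $r^i \defeq \refl_{\inr(n_0)}$ and $e^i(l) \defeq \_ \ct \glue(l)$ (using that composing with a fixed path is an equivalence). Two initial objects of a wild category are linked by an isomorphism, and since composition and identities in $\PP$ act componentwise on the underlying fibrewise functions, this isomorphism restricts to fibrewise equivalences $J^i(m) \simeq J^C(m)$ and $K^i(n) \simeq K^C(n)$. As $\inp$ agrees with $\inl$ on $M$ and with $\inr$ on $N$, these are exactly the instances of~\eqref{eq:thm-higher-svk} with $x \equiv \inr(n_0)$ and $y$ ranging over $M+N$; the case $x \equiv \inl(m_0)$ follows by the same reasoning applied to the symmetric pushout $\pout L N M$, which is canonically equivalent to $P$.

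I expect the main obstacle to be the step in the second paragraph: pinning down the eliminator of the indexed higher inductive type $C$ and matching it \emph{precisely} with initiality in $\PP$ --- verifying that $\PP$-morphisms out of $C$ are exactly what the recursion principle produces (in particular, that commuting with the underlying functions of the equivalences is the right notion) and that the induction principle genuinely delivers their uniqueness. This is where using a bi-invertible-map rather than a quasi-inverse presentation is essential, and it is also the part of the development that is not formalized. Everything else --- the two invocations of the eliminators and the ``initial objects are unique up to a contractible space of isomorphisms'' argument --- is routine, modulo the transport bookkeeping already illustrated in Section~\ref{sec:get-ind-from-rec}.
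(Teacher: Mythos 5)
Your high-level route is the paper's route: identify the object $\bigl(C \circ i_1,\, C \circ i_2,\, \nil,\, \glconstr\bigr)$ as initial in the category $\PP$ of \Cref{thm:mainresult-pushout-based}, invoke uniqueness of initial objects to transfer initiality to the equality family, and then let the basepoint range over $M+N$. Your explicit symmetry argument for the $x \equiv \inl(m_0)$ case is a more detailed rendering of the paper's terse ``letting $n_0$ vary''.

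Where you diverge is in \emph{how} you propose to establish initiality of $C$ in $\PP$, and this is exactly the point the paper flags in \Cref{remark:generalized-squid}. You propose to take the recursion and induction principles of the \emph{unfolded} bi-invertible presentation \eqref{eq:pushout-free-higher-groupoid}, and to ``fold'' them into the statement that $\PP$-morphisms out of $C$ exist uniquely, arguing that commuting with the underlying functions of an equivalence automatically gives commutativity with the inverses and coherences. The paper does \emph{not} do this. It instead invokes the Kaposi--Kov\'acs specification of higher inductive-inductive types, under which the folded form \eqref{eq:pushout-free-higher-groupoid-nice} (with the $\simeq$ replaced by a type equality via univalence) is admissible as a specification outright, and its recursion/induction principle \emph{is} the $\PP$-universal property on the nose. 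The paper explicitly warns that the route you propose --- deriving this from the direct induction principle of \eqref{eq:pushout-free-higher-groupoid} --- is ``not immediate at all'', that ``due to the type dependency in the direct induction principle, it becomes very hard to fold the components'', and that it ``would be extremely tedious'' in a cubical proof assistant. Your observation that a square commuting on the forward maps of two equivalences forces commutation of the mate square on the inverses is true, but it does not obviously address the real obstruction, which is the pathover/transport structure in the direct induction motive: the $\mathsf{leq}$ and $\mathsf{req}$ cases live in dependent path types over $\glconstr$, $\mathsf{linv}$, $\mathsf{rinv}$, and matching that data against the two-sided commuting-square condition of a $\PP$-morphism is precisely the bookkeeping the paper says does not reduce to routine manipulations. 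You do correctly identify this as the ``main obstacle'' and note it is unformalized, which is honest; but where you call the rest of it ``routine, modulo transport bookkeeping'', the paper's position is that the transport bookkeeping \emph{is} the whole difficulty, and they route around it by choosing a semantics of HIITs in which it does not arise.
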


\begin{proof}
Like all (indexed/higher/ordinary) inductive types, \eqref{eq:pushout-free-higher-groupoid-nice} is (homotopy-) initial in an appropriately formulated category of algebras (see \cite{awodeyGamSoja_hoAlgs}, \cite{Coquand:2018:HIT:3209108.3209197}, and others).
Here is where we draw the connection with the main result of the paper:
The category in which \eqref{eq:pushout-free-higher-groupoid-nice} is initial is the category $\PP$ from \Cref{thm:mainresult-pushout-based}.%
\footnote{To be precise, the object $(C \circ i_1, C \circ i_2, \nil, \glconstr)$ is initial in $\PP$.}
This is easy to see when we use the general specification and definition of higher inductive-inductive types given by Kaposi and Kov{\'a}cs \cite{kaposi_et_al:LIPIcs:2018:9190,AAhiits}, but see \Cref{remark:generalized-squid} below.

By the uniqueness of the initial object and by \Cref{thm:mainresult-pushout-based},
$C(x)$ is equivalent to $\inr(n_0) =_P \inp(x)$.
Letting $n_0$ vary, we get the statement of the theorem.
\end{proof}

It is relatively straightforward to recover the set-truncated SvK statement (\Cref{thm:favonia:SvK}) from the higher version (\Cref{thm:higher-SvK}).
We can simply set-truncate both sides in \eqref{eq:thm-higher-svk} and then prove that $\trunc 0 {\mathfrak C(x,y)}$ is equivalent to $\code(x,y)$ by constructing maps in both directions.

\begin{remark} \label{remark:generalized-squid}
\Cref{thm:higher-SvK} and its proof 
deserve additional comments.
We think it is fair to say that the formal theory of indexed higher inductive types is not yet well-established, but it is under very active development.
Kaposi and Kov{\'a}cs (\cite{kaposi_et_al:LIPIcs:2018:9190,AAhiits}) have suggested a definition for general higher inductive-inductive types which captures the case we need.
Indexed higher inductive types are considered in some of the cubical settings; cf.\ Cavallo and Harper \cite{cavallo2019higher}, and there are plans to extend cubical Agda~\cite{andrea:cubicalagda,anders:cubicalblog,andreaAnders:experiments} and redtt~\cite{redtt} with the concept (at the time of writing, a possibly not final version is available in cubical Agda).
The syntax in \eqref{eq:pushout-free-higher-groupoid} is the obvious and non-controversial one for such indexed higher inductive types.
We think it would be desirable to also allow the syntactical representation in \eqref{eq:pushout-free-higher-groupoid-nice}, even if only as syntactic sugar for \eqref{eq:pushout-free-higher-groupoid}.
Note that Kaposi and Kov{\'a}cs allow equalities between types, which is very similar to allowing this family of equivalences.

The critical step in the above proof of \Cref{thm:higher-SvK} is to establish \eqref{eq:pushout-free-higher-groupoid-nice} as the initial object of the category $\PP$.
With the specification suggested by Kaposi and Kov{\'a}cs allowing \eqref{eq:pushout-free-higher-groupoid-nice}, with equalities instead of equivalences, this part is easy.
However, we want to emphasize that the initiality of \eqref{eq:pushout-free-higher-groupoid} is not immediate at all if we use what we could call the \emph{direct induction principle}\footnote{The terminology was suggested by Anders M\"ortberg in a discussion with the authors.}.
The direct induction principle is the ``standard'' principle one derives by giving one case for each constructor, as done in the book~\cite{hott-book} and by current proof assistants such as cubical Agda.
Unfortunately, due to the type dependency in the direct induction principle, it becomes very hard to ``fold'' the components for the type \eqref{eq:pushout-free-higher-groupoid} in order to achieve the principle one would expect from \eqref{eq:pushout-free-higher-groupoid-nice}.
We expect that implementing \Cref{thm:higher-SvK} in cubical Agda would be extremely tedious for this reason.

The core of the problem with the direct induction principle is that it does not allow us to ``reason on the level of constructors''.
As an example, let us consider the interval with two point constructors and one path constructor.
If we can reason on the level of constructors, it is by ``singleton contraction'' clear that one point and the path constructor form a contractible pair, and that the interval is therefore equivalent to the type generated by a single point.
With the direct induction principle, this style of reasoning is not possible. It turns out to be easy enough to prove the interval contractible, but  in other cases, the situation is less fortunate.

As an example, proposals by Pinyo and Altenkirch~\cite{gun:squid}, unpublished work by van der Weide et al., and a formalization by Cavallo based on a remark by M\"ortberg~\cite{Evan:Squid} suggest to define $\Z$ as a higher inductive type, and their very definition is chosen such that $\Z$ should become the initial object of the category of pointed types with automorphism (cf.\ \Cref{sec:application-S1}).
Their definitions are versions of \eqref{eq:free-higher-groupoid-nice} with $A$ and $\sim$ replaced by the unit type and the relation constantly unit.
Crucially, they have to ``unfold'' the constructor $\cons$, since this is what the current cubical proof assistants require.
It turns out that this makes it extremely tedious to prove the resulting type equivalent to other definitions of the integers.
\end{remark}

\section{Final Remarks}

We have shown a theorem, reminiscent of an induction principle, which allows to reason about path spaces of pushouts/coequalizers.
There are multiple reasonable formulations of this result.
We have then proceeded to use this result for short proofs of two statements that had formerly been proved with encode-decode constructions.

The core of the proof in \Cref{sec:mainresult-cat} is the isomorphism between $\CC$ and $\DD$ (\Cref{lem:cats-are-iso}).
Strictly speaking, the full isomorphism is not required since we are only interested in the initial objects, but showing the isomorphism seems conceptually cleaner and is not significantly harder that a more minimalistic approach.

Kristina Sojakova has formulated an alternative version of the proof of \Cref{thm:mainresult-based}.
This proof is presented in a more direct fashion, without explicitly going through initiality in wild categories, although all analogous steps (apart from the full isomorphism of categories) are still taken.
Such a presentation makes it easier to see that the first $\beta$-rule in \Cref{thm:mainresult-based} and \Cref{thm:main-pushout} holds judgmentally.

A question to consider in the future would be whether it is possible to generalize the result from coequalizers to arbitrary higher inductive types or at least to a larger fragment of higher inductive types.

\subsection*{Acknowledgments}
We thank Kristina Sojakova, Paolo Capriotti, and Anders M\"ortberg for valuable suggestions and inspiring discussions.
We are also grateful to the anonymous reviewers for their remarks, which have helped us to improve this paper.


\bibliographystyle{plain}
\bibliography{master}

\end{document}